\newtheorem{theorem}{Theorem}[section]
\newtheorem{proposition}[theorem]{Proposition}
\theoremstyle{definition}
\theoremstyle{remark}
\numberwithin{equation}{section}
\def\R{\mathbb{R}}
\def\Z{\mathbb{Z}}
\def\mc{\mathcal}
\def\lesim{\lesssim}
\def\beq{\begin{equation}}
\def\endeq{\end{equation}}
\theoremstyle{plain}
\newtheorem{thm}{Theorem}[section]
\newtheorem{claim}[thm]{Claim}
\newtheorem*{conj*}{Conjecture}
\newtheorem*{openproblem*}{Open Problem}
\theoremstyle{remark}
\begin{document}

\title[Wolff's inequality]{Remarks on Wolff's inequality for hypersurfaces}
\author{Shaoming Guo and Changkeun Oh}
\date{}
\maketitle

\begin{abstract}
We run an iteration argument due to Pramanik and Seeger \cite{PS}, to provide a proof of sharp decoupling inequalities for conical surfaces and for $k$-cones. These are extensions of results of \L aba and Pramanik \cite{LP1} to sharp exponents. 
\end{abstract}

%
%
\let\thefootnote\relax\footnote{
AMS subject classification:  42B08, 42B15.}

\section{Statements of the results}\label{sec1}

For $n\ge 2$, let $L_0\subset \R^{n+1}$ be an affine subspace of dimension $n$ that does not pass through the origin. Let $E_0\subset L_0$ be a smooth compact surface of dimension $n-1$. Moreover, if $L_0$ is identified with $\mathbb{R}^n$ in a canonical manner, then we can assume that $E_0$ has a non-vanishing Gaussian curvature at every point. The surface $S$ given by 
$$
S=\{tx\in \R^{n+1}: x\in E_0; t\in [C_1, C_2]\}
$$
for some $0<C_1<C_2$ is called a \emph{conical surface} induced by $E_0$. For each $a \in S$, there exists a unique $b \in E_{0}$ such that $a=tb$ for some $t \in [C_{1},C_{2}]$. We denote by $\eta(a)$ the convex hull $[C_{1}b, C_{2}b]$ in $\mathbb{R}^{n+1}$, and call $\eta(a)$ the \textit{$1$-plane} at $a$.\\

Now we follow the approach of \L aba and Pramanik \cite{LP1} to introduce the notion of conical surfaces of higher co-dimensions.

Let $L_{0}$ be an $n$-dimensional linear subspace of $\mathbb{R}^{n+k}$. Let $v_{1}, v_{2}, \ldots,v_{k}$ be linearly independent vectors such that 
$$\{ x+c_{1}v_{1}+c_2 v_2+ \cdots+c_{k}v_{k} \in \mathbb{R}^{n+k} : x \in  L_{0}, \, (c_{1}, c_2, \ldots, c_{k}) \in \mathbb{R}^k \} = \mathbb{R}^{n+k}.$$
For each $i=1, 2, \ldots,k$, denote 
$$
L_{i}=L_{0}+v_{i}.
$$ 
For each $L_{i}$, we fix a bounded and convex solid $F_{i}$ such that $E_{i}:=\partial{F_{i}}$ is a $C^{\infty}$ surface and has non-vanishing Gaussian curvature at every point on it. Thus for each unit normal vector $x\in S^{n-1}$ in $\mathbb{R}^{n}$, each $E_{i}$ contains exactly one point $a_{i}$ such that $x$ is the outward normal vector to $E_{i}\subset L_{i}$ at $a_{i}$. We say that a $(k+1)$-tuple of points $(x_{0},\ldots,x_{k})$ is \textit{good} if $x_{i} \in E_{i}$ for every $0\le i\le k$, and if the outward unit normal vectors to $E_{i}$ at $x_{i}$ are the same. The \textit{k-cone} $S$ in $\mathbb{R}^{n+k}$ induced by the collection $\{E_{i}\}_{i=0}^{k}$ is defined by
$$S = \bigcup_{(x_{0},\ldots,x_{k}) : \textit{good}} \eta(x_{0},\ldots,x_{k}),$$
where $\eta(x_{0},\ldots,x_{k})$ denotes the convex hull generated by $x_{0},\ldots,x_{k}$ in $\mathbb{R}^{n+k}$. According to Lemma 7.1 in \cite{LP1}, each $a \in S$ belongs to $\eta(x_{0},\ldots,x_{k})$ for exactly one good $(k+1)$-tuple $(x_{0}, \ldots, x_{k})$. We will call $\eta(x_{0}, \ldots, x_{k})$ the \textit{k-plane} at $a$, and denote it by $\eta(a)$. \\

Let $S$ be a conical surface induced by $E_0$ or a \textit{k}-cone induced by $\{E_i\}_{i=0}^k$. For each $a \in S$, denote by $n_a$ the unit normal vector to $S$ at $a$. For a small number $\delta>0$, we denote by $\mc{M}_{\delta}$ a $\delta^{1/2}$-separated subset of $E_0$.  Moreover, denote by $\mathcal{N}_{\delta}S$ the $\delta$-neighborhood of $S$. Throughout the paper, we are interested in a covering of $\mc{N}_{\delta}S$ satisfying the following assumption.

\subsection*{Assumption (A)}
For each small $\delta >0$ and each $a \in S$, let $\Pi_{a,\delta}$ be a rectangular box centered at $a$, of dimensions $C\delta \times \underbrace{C\delta^{\frac{1}{2}} \times \cdots \times C\delta^{\frac{1}{2}}}_{(n-1) \text{ copies }}\times \underbrace{C \times \cdots \times C}_{k \text{ copies}}$, where the short direction is normal to $S$ at $a$, the long directions are parallel to the $k$-plane $\eta(a)$ at $a$, and the mid-length directions are tangent to $S$ at $a$ but perpendicular to the \textit{k}-plane $\eta(a)$. Then\\

\begin{description}
	\item[$A_1$] 	$C_{1}\Pi_{a,\delta} \subset \mathcal{N}_{\delta}S \cap 
	\{x\in \mathbb{R}^{n+k}: (x-a)\cdot n_a \leq \delta	\}$
	for some small constant $C_{1}>0$.\\
	\item[$A_2$]	$\{\Pi_{a,\delta}	\}_{a \in \mathcal{M}_{\delta}}$ forms a finitely overlapping covering of $\mathcal{N}_{\delta}S$. \\
	\item[$A_3$] For every $a \in \mathcal{M}_{\delta}$, there are at most $O(1)$ distinct $b \in \mathcal{M}_{\delta}$ such that $|n_a-n_b| \leq C_ {2}\delta^{1/2}$.
	\\
	\item[$A_4$] If $0< \delta \leq \sigma$ and if $
	\Pi_{a,\delta} \cap \Pi_{b,\sigma} \neq \emptyset$ for some $a,b \in S$, then $\Pi_{a,\delta} \subset C_{3}\Pi_{b,\sigma}$.	\\
\end{description}
This group of assumptions is identical to that in \cite{LP1}. The constants $C,C_{1},C_{2},C_{3}$ are independent of the parameter $\delta$ and the choice of $\mathcal{M}_{\delta}$.

Let $\Xi_{a}$ be a smooth function in $\mathbb{R}^{n+k}$ with 
$\|{\Xi_{a}}\|_{L^1(\mathbb{R}^{n+k})} \sim 1$ such that $\mathrm{supp}(\widehat{\Xi_{a}}) \subset \Pi_{a,\delta}$ and $\{ {\widehat{\Xi_{a}}}\}_{a \in \mathcal{M}_{\delta}} $ forms a smooth partition of unity of 
$\mathcal{N}_{\delta}S$.

\begin{theorem}\label{main1}
	Let $n \geq 2$ and $k \geq 1$.
	Let $S$ be a $k$-cone in $\mathbb{R}^{n+k}$. Under Assumption $(A)$, if $\mathrm{supp}(\hat{f}) \subset \mathcal{N}_{\delta}S$, then for $p \geq 2+\frac{4}{n-1}$, we have 
	\begin{equation}\label{p2-decoupling}
	\|{f}\|_{L^p(\mathbb{R}^{n+k})} \leq  C_{p,\epsilon} 
	\delta^{-\frac{n-1}{4}+\frac{n+1}{2p}-\epsilon}\biggl( \sum_{a \in \mathcal{M}_{\delta}} \|{\Xi_{a} * f}\|_{L^p(\mathbb{R}^{n+k})}^2 \biggr)^{\frac{1}{2}},
	\end{equation}
for every $\epsilon>0$.
\end{theorem}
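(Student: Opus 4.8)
The plan is to run the induction-on-scales iteration of Pramanik and Seeger \cite{PS}, feeding into it, at every intermediate scale, the sharp $\ell^2$ decoupling of Bourgain and Demeter for $(n-1)$-dimensional hypersurfaces with non-vanishing Gaussian curvature. Set $\gamma(p)=\max\bigl(0,\frac{n-1}{4}-\frac{n+1}{2p}\bigr)$, so that for $p\ge 2+\frac{4}{n-1}$ one has $\gamma(p)=\frac{n-1}{4}-\frac{n+1}{2p}$, and let $D(\delta)$ be the smallest constant for which
\[
\|f\|_{L^p(\R^{n+k})}\le D(\delta)\Bigl(\sum_{a\in\mc{M}_\delta}\|\Xi_a*f\|_{L^p(\R^{n+k})}^2\Bigr)^{1/2}
\]
holds for every $k$-cone satisfying Assumption $(A)$ and every $f$ with $\widehat f$ supported in $\mc{N}_\delta S$; because the constants in $(A)$ are uniform, $D(\delta)$ is a legitimate quantity. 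The goal is $D(\delta)\lesssim_\epsilon\delta^{-\gamma(p)-\epsilon}$, which is exactly \eqref{p2-decoupling}. The base case $D(\delta)\lesssim 1$ for $\delta\gtrsim 1$ is immediate from $A_2$ (finite overlap) together with Cauchy--Schwarz.

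The key estimate is a one-step recursion: for every $\delta<\sigma\le 1$,
\[
D(\delta)\lesssim_\epsilon\sigma^{-\gamma(p)-\epsilon}\,D(\delta/\sigma).
\]
To prove this I would factor the decoupling of $\mc{N}_\delta S$ into $\{\Pi_{a,\delta}\}_{a\in\mc{M}_\delta}$ through the intermediate plates $\{\Pi_{b,\sigma}\}_{b\in\mc{M}_\sigma}$, using the nesting property $A_4$ --- each $\Pi_{a,\delta}$ lies in a bounded dilate of a unique $\Pi_{b,\sigma}$ --- and the fact that $\ell^2$ decoupling composes across scales with no loss. Inside a fixed $\Pi_{b,\sigma}$, the set $\mc{N}_\delta S\cap\Pi_{b,\sigma}$ becomes, after a linear map adapted to the conical structure of $S$ (the parabolic rescaling for $k$-cones, cf. \cite{LP1}), comparable to $\mc{N}_{\delta/\sigma}S'$ for a $k$-cone $S'$ again obeying $(A)$ with the same constants, the plates $\Pi_{a,\delta}\subset\Pi_{b,\sigma}$ matching the scale-$(\delta/\sigma)$ plates of $S'$; so this part costs $D(\delta/\sigma)$. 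It remains to decouple $\mc{N}_\delta S$ into $\{\Pi_{b,\sigma}\}$. Each $\Pi_{b,\sigma}$ has its $k$ long edges of full length and its $n-1$ mid edges of length $\sigma^{1/2}$ inside a slab of normal thickness $\sigma$, and these plates are indexed by a $\sigma^{1/2}$-net of the base hypersurface $E_0$; hence, after accounting for the $k$ flat directions of the $k$-plane (on which no decoupling takes place and the estimate is lossless) and using $A_1$, $A_3$ to control overlaps, this reduces to the Bourgain--Demeter $\ell^2$ decoupling for an $(n-1)$-dimensional elliptic hypersurface into caps of radius $\sigma^{1/2}$, whose constant is $\lesssim_\epsilon(\sigma^{1/2})^{-2\gamma(p)-\epsilon}=\sigma^{-\gamma(p)-\epsilon}$. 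Multiplying the two contributions yields the recursion.

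To finish, I would iterate the recursion with $\sigma=\delta^{1/2}$; this reaches scale $\sim 1$ after $N=O(\log\log\tfrac1\delta)$ steps. The exponents telescope as the geometric series $\gamma(p)\bigl(\tfrac12+\tfrac14+\cdots+2^{-N}\bigr)\le\gamma(p)$, the terminal scale contributes a harmless factor $\delta^{-\gamma(p)2^{-N}}=O(1)$, and the product of the implicit constants is polylogarithmic in $\delta^{-1}$ and so is absorbed into $\delta^{-\epsilon}$. The outcome is $D(\delta)\lesssim_\epsilon\delta^{-\gamma(p)-\epsilon}=\delta^{-\frac{n-1}{4}+\frac{n+1}{2p}-\epsilon}$.

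The step I expect to be the main obstacle is the one-scale decoupling of $\mc{N}_\delta S$ into $\{\Pi_{b,\sigma}\}$. The difficulty is that a $k$-cone is not a product: the $n-1$ curved directions rotate as one moves along the flat $k$-plane (for instance, along the dilation direction of a genuine cone), so one cannot fix a single coordinate frame and apply the Bourgain--Demeter estimate in one shot. The role of the Pramanik--Seeger iteration --- and the place where all four parts of Assumption $(A)$ are genuinely used --- is precisely that this twisting is negligible between two consecutive scales, so that after the parabolic rescaling the problem reproduces itself at scale $\delta/\sigma$; organizing the plates $\Pi_{a,\delta}$ into the correct chains of intermediate plates, checking that the rescaled cone still satisfies $(A)$, and bounding the overlaps is the technical core. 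By comparison, the summation of the recursion and the bookkeeping of the $\epsilon$-losses are routine.
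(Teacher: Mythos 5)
There is a genuine gap, and it is exactly at the place you flag as the ``main obstacle'': the one-scale decoupling of $\mathcal{N}_\delta S$ into the plates $\{\Pi_{b,\sigma}\}$ with constant $\sigma^{-\gamma(p)-\epsilon}$ does not follow from Bourgain--Demeter plus Fubini, and your iteration with $\sigma=\delta^{1/2}$ does not resolve the twisting problem you yourself identify. For a cylinder one could indeed fix a frame, apply Theorem~\ref{b-d} to the $(n-1)$-dimensional cross-section, and Fubini across the flat $k$-plane directions. But on the $k$-cone the curved cross-section rotates as one moves the full $O(1)$ length of the $k$-plane, so there is no single frame in which $\mathcal{N}_\delta S$ is (even approximately) a product. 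Quantitatively, if a $\delta^{2\mu}$-plate is parametrized as in \eqref{cone-part} with the flat parameters $\theta_j$ running over an $O(1)$ interval, the deviation of the surface from the cylinder \eqref{cylinder} is of size $\sum_j\theta_j\,\delta^{2\mu}=O(\delta^{2\mu})$, i.e.\ exactly the normal thickness of the plate one started with, so no decoupling gain at all is produced. Your recursion $D(\delta)\lesssim\sigma^{-\gamma(p)-\epsilon}D(\delta/\sigma)$ is a true statement, but as stated it is precisely Theorem~\ref{main1} at scale $\sigma$, so deriving it this way is circular; and the halving iteration $\sigma=\delta^{1/2}$ takes a multiplicative step that is far too large for the cylinder approximation to be accurate.

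The paper's mechanism for killing the twist is the ingredient your sketch omits: a preliminary \emph{slicing} of $S$ in the flat $k$-plane coordinates into pieces $\tilde S$ of thickness $\delta^{\epsilon/2}$ (Subsection~\ref{subsection2.1}), at a cost of $\delta^{-O(\epsilon)}$ by the triangle inequality. Within such a slice the flat parameters $\theta_j$ only range over $[0,\delta^{\epsilon/2}]$, and the same computation now gives that the cone is within $\delta^{2\mu+\epsilon/2}$ of a cylinder. That accuracy is enough to apply Theorem~\ref{b-d} after rescaling and decouple from $\delta^{2\mu}$-plates to $\delta^{2\mu+\epsilon/2}$-plates (Proposition~\ref{1011prop2.3}), but no further; the multiplicative step is therefore forced to be $\delta^{\epsilon/2}$, the number of iterations is $O(1/\epsilon)$ rather than $O(\log\log(1/\delta))$, and the total loss is $\delta^{-O(\epsilon^2)}$ which is acceptable (Proposition~\ref{1011prop2.2}). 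If you want to keep a Pramanik--Seeger scheme with a fixed (not $\epsilon$-dependent) step size $\sigma$, you would still need the slicing into $\sigma$-thin slabs in the flat directions before each application of Bourgain--Demeter, at which point the argument essentially becomes the paper's; the halving choice $\sigma=\delta^{1/2}$ cannot be salvaged, because a slab of flat thickness $\delta^{1/2}$ is too coarse for the cylinder approximation at normal scale $\delta$.
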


By a standard interpolation, the above estimate \eqref{p2-decoupling} further implies 
\begin{equation}\label{p2-decoupling-1}
\|f\|_{L^p} \leq C_{p,\epsilon}  \delta^{-\epsilon} (\sum_{a\in \mc{M}_{\delta}} \|\Xi_{a}*f\|_{L^p}^2)^{1/2}
\end{equation}
for every $2 \leq p \leq 2+\frac{4}{n-1}$ and every $\epsilon>0$. Up to the arbitrarily small factor $\epsilon>0$, both \eqref{p2-decoupling} and \eqref{p2-decoupling-1} are sharp. For the sharpness we refer to the discussion in the introduction of the paper \cite{LP1}.\\

Theorem \ref{main1} involves $k$-cones. Recall that $k$-cones are generated by the boundaries $E_i$ of bounded and strictly convex bodies $F_i\subset L_i$ with $0\le i\le k$.  That means, for each $i$, if $L_i$ is identified with $\mathbb{R}^n$ in a canonical manner, then at every point on $E_i$, all the principle curvatures are positive. However, in the definition of a conical surface $S$ induced by $E_0$, we only assumed $E_0$ to have non-vanishing Gaussian curvatures. That means principle curvatures might have different signs. For conical surfaces, we prove 

\begin{theorem}\label{main2}
	Let $n \geq 2$.
	Let $S$ be a conical surface in $\mathbb{R}^{n+1}$. Under Assumption $(A)$, if $\mathrm{supp}(\hat{f}) \subset \mathcal{N}_{\delta}S$, then for $p \geq 2+\frac{4}{n-1}$, we have 
	\begin{displaymath}
	\|{f}\|_{L^p(\mathbb{R}^{n+1})} \leq C_{p,\epsilon} 
	\delta^{\frac{n}{p}-\frac{n-1}{2}-\epsilon}\biggl( \sum_{a \in \mathcal{M}_{\delta}} \|{\Xi_{a} * f}\|_{L^p(\mathbb{R}^{n+1})}^p \biggr)^{\frac{1}{p}},
	\end{displaymath}
for every $\epsilon>0$.
\end{theorem}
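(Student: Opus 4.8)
The plan is to run the Pramanik--Seeger iteration \cite{PS} in exactly the same way as in the proof of Theorem~\ref{main1} given above; the argument for a conical surface differs from the one for a $k$-cone at a single point. Write $D(\delta)$ for the best constant in the inequality asserted by Theorem~\ref{main2}, and induct on the scale $\delta$. Fix an intermediate scale $\sigma=\delta^{1/2}$ (or, more generally, $\delta<\sigma<1$). Assumption~(A) --- in particular the boundedly overlapping covering $A_{2}$ and the nesting $A_{4}$ --- lets one decouple $f$ into the coarser plates $\Pi_{b,\sigma}$, $b\in\mathcal{M}_{\sigma}$. The dilation symmetry $x\mapsto\lambda x$ of $S$, together with the affine invariance built into~(A), lets one apply an affine map to the portion of $\mathcal{N}_{\delta}S$ lying over a single $\sigma$-plate, turning it into the $(\delta/\sigma)$-neighbourhood of a model cone affinely equivalent to $S$, so that the inductive hypothesis $D(\delta/\sigma)$ becomes available there. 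Since the Jacobians of these affine maps cancel between the two sides of a decoupling inequality, one is left with a relation of the shape $D(\delta)\lesssim D(\sigma)\,D(\delta/\sigma)$, which --- once supplied with the appropriate single-scale (base-surface) decoupling at each stage --- iterates over $O(\log\log(1/\delta))$ scales to the desired bound, the ubiquitous $(\log(1/\delta))^{O(1)}$ factors being absorbed into $\delta^{-\epsilon}$. Because the inequality is formulated with an $\ell^{p}$ sum on the right, the single-scale decouplings compose multiplicatively and no intermediate application of H\"older is needed.

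The one place where the present argument departs from that of Theorem~\ref{main1} is the single-scale, base-surface input. For a $k$-cone the relevant base surfaces are the $E_{i}=\partial F_{i}$, each of which bounds a strictly convex body and so has all of its principal curvatures of one sign; there one uses the sharp $\ell^{2}$-decoupling of Bourgain and Demeter. For a conical surface, $E_{0}$ is only assumed to have non-vanishing Gaussian curvature, so its principal curvatures may have mixed signs and no $\ell^{2}$-decoupling is available; this is exactly the hypothesis that Theorem~\ref{main2} dispenses with, and it forces an $\ell^{p}$ formulation. In its place we use the sharp $\ell^{p}$-decoupling for a smooth hypersurface of non-vanishing Gaussian curvature in $\mathbb{R}^{n}$: if $\widehat{g}$ is supported in the $\rho$-neighbourhood of $E_{0}$ and that neighbourhood is covered by boundedly overlapping $\rho\times\rho^{1/2}\times\cdots\times\rho^{1/2}$ plates $\theta$, then
\[
\|g\|_{L^{p}(\mathbb{R}^{n})}\le C_{p,\epsilon}\,\rho^{-\epsilon}\Bigl(\sum_{\theta}\|g_{\theta}\|_{L^{p}(\mathbb{R}^{n})}^{p}\Bigr)^{1/p},\qquad 2\le p\le \tfrac{2(n+1)}{n-1},
\]
which follows from the decoupling theorem for hypersurfaces of non-vanishing Gaussian curvature (Bourgain--Demeter and subsequent work) together with the standard passage from quadric to general smooth hypersurfaces --- a Taylor expansion into pieces well approximated by affine images of non-degenerate quadratic graphs, followed by an auxiliary induction on scales. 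For $p\ge\frac{2(n+1)}{n-1}$ one interpolates this endpoint estimate with the trivial bound $\|g\|_{L^{\infty}}\le\sum_{\theta}\|g_{\theta}\|_{L^{\infty}}$, whose constant is the number of plates $\sim\rho^{-(n-1)/2}$.

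Inserting this base-surface decoupling into the iteration described above yields $D(\delta)\le C_{p,\epsilon}\,\delta^{\frac{n}{p}-\frac{n-1}{2}-\epsilon}$, which is Theorem~\ref{main2}. I expect the main work to lie in this last step: the decoupling in the ruled direction of the $1$-plane, which was free (by $L^{2}$-orthogonality) in the $\ell^{2}$ argument of Theorem~\ref{main1}, is no longer free in $\ell^{p}$, so one has to run the iteration carefully enough to see that the accumulated $\ell^{p}$ losses --- those coming from the flat, ruled direction together with the ones coming from the $\ell^{p}$- rather than $\ell^{2}$-base decoupling --- add up to precisely the exponent $\frac{n}{p}-\frac{n-1}{2}$; a secondary point is to keep the quadric-to-$E_{0}$ reduction uniform across the whole range of scales produced by the recursion. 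That this exponent is the correct answer, and is sharp up to the $\epsilon$-loss, is seen by testing the inequality on the function with $\widehat{f}=\mathbf{1}_{\mathcal{N}_{\delta}S}$, as discussed in \cite{LP1}.
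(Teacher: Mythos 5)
Your outline is in the right spirit --- iterate a one-step decoupling using the Bourgain--Demeter theorem for non-definite hypersurfaces as the base input --- but it differs from the paper's argument in a way that matters, and it contains a quantitative error that would derail the exponent count.

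First, the base-surface input is misquoted. You claim that for a hypersurface in $\mathbb{R}^n$ with non-vanishing (possibly mixed-sign) Gaussian curvature the $\ell^p(L^p)$-decoupling constant is $\rho^{-\epsilon}$ for all $2\le p\le \frac{2(n+1)}{n-1}$. That is false for $\frac{2n}{n-1}<p\le\frac{2(n+1)}{n-1}$; the sharp $\ell^p(L^p)$ bound is $\rho^{\frac{n}{p}-\frac{n-1}{2}-\epsilon}$, which is a genuine loss in that range (the paper records this precisely as Theorem~\ref{b-d2}). Since the whole point of the iteration is to accumulate exactly the loss $\delta^{\frac{n}{p}-\frac{n-1}{2}}$ from the $O(1/\epsilon)$ steps, starting from the wrong per-step constant cannot deliver the stated exponent. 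The paper handles this by proving a rescaled version (Proposition~\ref{rescalied b-d2}) giving a loss of $\delta^{\alpha(\frac{n}{p_0}-\frac{n-1}{2})-\epsilon}$ per refinement of scale $\delta^{2\mu}\to\delta^{2\mu+\alpha}$, and then iterates roughly $2/\epsilon$ times with $\alpha=\epsilon/2$ so the exponents sum to $\frac{n}{p_0}-\frac{n-1}{2}$.

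Second, and more structurally, your iteration relation $D(\delta)\lesssim D(\sigma)\,D(\delta/\sigma)$ is the paraboloid-style multiplicativity, not the Pramanik--Seeger mechanism the paper actually uses (and credits). The paper instead slices $\mathcal{N}_\delta S$ into slabs of thickness $\delta^{\epsilon/2}$ in the ruled direction, and the crux of the one-step refinement (Propositions~\ref{prop3.4}/\ref{1011prop2.3}) is a concrete geometric observation: on a spatial ball of radius $\delta^{-(2\mu+\epsilon/2)}$, the slab over a $\delta^\mu$-cap lies inside a $\delta^{2\mu+\epsilon/2}$-neighbourhood of a \emph{cylinder} $\{(\xi',G(\xi'))\}\times\mathbb{R}$, verified by Taylor's formula (display \eqref{var}--\eqref{conic cyl}). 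This reduces the cone decoupling to hypersurface decoupling $\times$ Fubini in the ruled direction. You flag exactly this difficulty --- ``the decoupling in the ruled direction \dots is no longer free in $\ell^p$'' --- but then leave it as ``run the iteration carefully enough,'' which is where the whole content of the proof lives. For a multiplicativity argument one would also need to check that a conical surface over a general $E_0$ is genuinely affine-self-similar at all intermediate scales, which is not obvious (and not needed once one passes to cylinders). I'd suggest reworking the proposal around the slab-and-cylinder reduction: fix the base-surface exponent, write out the one-step gain at scale $\delta^{2\mu}\to\delta^{2\mu+\epsilon/2}$, and tally the $O(1/\epsilon)$ iterates.
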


Theorem \ref{main1} and Theorem \ref{main2} are extensions of results in \L aba and Pramanik \cite{LP1} to sharp exponents. Our proof relies on an iteration argument and on results of Bourgain and Demeter \cite{BD1}, \cite{BD2}. This iteration argument was first used by Pramanik and Seeger \cite{PS}, and was later used by Bourgain and Demeter \cite{BD2} to obtain sharp decoupling estimates for the cone. For the prior developments on Wolff's inequalities, we refer to Wolff \cite{Wolff00}, \L aba and Wolff \cite{LW02}, Garrig\'os and Seeger \cite{GS09}, \cite{GS10}.\\

For $(\xi_{1},\ldots,\xi_{n}) \in \mathbb{R}^n$, we use the notation $\xi =(\xi_{1},\ldots,\xi_{n})$ and $\xi' = (\xi_{1},\ldots,\xi_{n-1})$. 
Throughout the paper, we write $A \lesssim B$ if $A \leq cB$ for some constant $c>0$, and $A \sim B$ if $c^{-1}A \leq B \leq cB$. The constant $c$ will in general depend on fixed parameters such as $p, n$ and sometimes on the variable parameter $\epsilon$ but not the parameter $\delta$.\\

{\bf Acknowledgements.} The first author would like to thank Ciprian Demeter for helpful discussions.
The second author would like to thank his advisor, Prof Jong-Guk Bak, for suggesting
this research topic and for many valuable discussions. \\

\section{Proof of Theorem 1.1}\label{sec2}

A truncated hyperbolic paraboloid ${H}_{v}^{n-1}$ in $\mathbb{R}^{n}$ is defined for $v=(v_{1},\ldots,v_{n-1}) \in (\mathbb{R}\setminus \{0\})^{n-1}$ as
\begin{displaymath}
{H}_{v}^{n-1} = \{ (\xi_{1}, \ldots, \xi_{n-1}, v_{1}\xi_{1}^2+\cdots +v_{n-1}\xi_{n-1}^2) : 
|\xi_{i}| \leq 1 \}.
\end{displaymath}
When $v_{i}=1$ for all $i$, we use $P^{n-1}$ instead of $H_{v}^{n-1}$.
We denote by $\mathcal{N}_{\delta}H_{v}^{n-1}$ the $\delta$-neighborhood of $H_{v}^{n-1}$.
Let $\mathcal{P}_{\delta}$ be a finitely overlapping cover of $\mathcal{N}_{\delta}H_{v}^{n-1}$ with $\delta \times \delta^{1/2} \times \cdots \times \delta^{1/2}$ rectangular boxes $\Pi_{a,\delta}'$ centered at $a$. Moreover, denote $\mathcal{M}_{\delta}=\{a:\Pi_{a,\delta}' \in \mathcal{P}_{\delta}	\}$. For each $a\in \mc{M}_{\delta}$, let $\Xi_{a}'$ be a smooth function in $\mathbb{R}^{n}$ with $\|\Xi_{a}'\|_{L^1(\mathbb{R}^{n})} \sim 1$ and $\mathrm{supp}(\widehat{\Xi'_{a}}) \subset \Pi_{a,\delta}'$ such that $\{\widehat{\Xi'_{a}}\}_{a\in \mathcal{M}_{\delta}}$ forms a smooth partition of unity of $\mathcal{N}_{\delta}H_{v}^{n-1}$. 

To prove Theorem \ref{main1}, we will use the following theorem due to Bourgain and Demeter.

\begin{theorem}[\cite{BD2}]\label{b-d}
	Denote $p_{0}=\frac{2(n+1)}{n-1}$. If $\mathrm{supp}(\hat{f}) \subset \mathcal{N}_{\delta}{P}^{n-1}$, then 
	\begin{displaymath}
	\|f\|_{L^{p_{0}}(\mathbb{R}^{n})} \lesssim_{\epsilon} \delta^{-\epsilon}\biggl(\sum_{a \in \mathcal{M}_{\delta}} \| \Xi_{a}' * f \|_{L^{p_{0}}(\mathbb{R}^{n})}^{2}\biggr)^\frac{1}{2},
	\end{displaymath}
for every $\epsilon>0$.
\end{theorem}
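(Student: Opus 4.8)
Theorem \ref{b-d} is the endpoint $\ell^2$ decoupling theorem of Bourgain and Demeter, and the plan is to reproduce the argument of \cite{BD2}. First I would introduce, for each scale $\sigma \in [\delta,1]$, the decoupling constant $\mathrm{Dec}(\sigma)$: the smallest $C$ for which $\|g\|_{L^{p_0}(w_B)} \le C\bigl(\sum_{\tau}\|g_\tau\|_{L^{p_0}(w_B)}^2\bigr)^{1/2}$ holds for every $g$ with $\widehat g \subset \mc N_\sigma P^{n-1}$, every ball $B$ of radius $\sigma^{-1}$, and the decomposition of $\mc N_\sigma P^{n-1}$ into its $\sigma^{1/2}$-caps $\tau$ — together with its $n$-linear (transversal) analogue, where the caps $\tau_1,\dots,\tau_n$ are drawn from $\gtrsim 1$-separated portions of the paraboloid and the $L^{p_0}$ norm is replaced by the geometric mean $\prod_j \|g_{\tau_j}\|^{1/n}$. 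Parabolic rescaling (blowing up a $\sigma$-cap affinely to a full copy of $P^{n-1}$) together with transitivity of decoupling makes $\mathrm{Dec}$ essentially submultiplicative along the scales $\delta^{1/2^j}$, so it suffices to bound $\mathrm{Dec}(\delta)$. A Bourgain--Guth broad/narrow decomposition at an intermediate scale $K$ then controls $\mathrm{Dec}(\delta)$ by $K^{O(1)}$ times the $n$-linear constant, plus a "narrow" contribution which, after projecting onto the relevant lower-dimensional subspaces, reduces to $\mathrm{Dec}$ for paraboloids of dimension $<n-1$; the latter I would absorb by induction on $n$, the base case being $L^6$ decoupling for the parabola.

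The heart of the matter is multilinear decoupling in the range $2\le p\le p_0$. Here I would run a simultaneous induction on the dimension and on the scale, iterating through the dyadic-in-the-exponent scales $\delta,\delta^{1/2},\delta^{1/4},\dots$. Each step combines two ingredients: a flat (or lower-dimensional) decoupling that splits functions supported on $\delta$-slabs into functions on $\delta^{1/2}$-slabs at a cheap cost for small $p$, and — the crucial step — a "ball inflation" estimate that replaces $L^p$ averages over balls of radius $\delta^{-1}$ by $L^p$ averages over balls of radius $\delta^{-1/2}$ in the $n$-linear expression, gaining a power of $\delta$. The ball-inflation estimate rests on the multilinear Kakeya / multilinear restriction inequality of Bennett--Carbery--Tao. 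Interpolating the trivial $L^2$-orthogonality bound against the ball-inflated bound and iterating over all scales upgrades an estimate valid at some exponent $p$ into one valid at a larger exponent, and bootstrapping pushes this all the way to the critical $p_0 = \tfrac{2(n+1)}{n-1}$.

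Finally I would close the loop: feeding the multilinear bound back into the Bourgain--Guth reduction yields a functional inequality of the form $\mathrm{Dec}(\delta)\lesssim_{\epsilon,K}\delta^{-\epsilon}K^{O(1)}\,\mathrm{Dec}(\delta^{1/2})^{\theta}$ with some $\theta<1$, once the lower-dimensional contributions have been controlled by the dimension induction. Iterating this roughly $\log\log(1/\delta)$ times and optimizing in $K$ forces $\mathrm{Dec}(\delta)\lesssim_\epsilon \delta^{-\epsilon}$, which is exactly the claimed inequality after undoing the reduction back to $\delta^{1/2}$-caps.

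The main obstacle is the ball-inflation step and the multilinear Kakeya estimate underpinning it; a secondary but genuine difficulty is organizing the simultaneous induction on dimension and scale so that the broad/narrow decomposition loses only a factor $K^{O(1)}$, which is what makes the final $\epsilon$-loss absorbable.
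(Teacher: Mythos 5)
The paper offers no proof of this statement: it is imported verbatim as the main theorem of Bourgain--Demeter \cite{BD2}, and your outline is a faithful schematic of exactly the argument in that reference (decoupling constants, parabolic rescaling, Bourgain--Guth broad/narrow reduction, multilinear decoupling via ball inflation and multilinear Kakeya, double induction on dimension and scale). The only slip is directional: ball inflation passes from averages over \emph{smaller} balls (radius $\rho^{-1}$) to a \emph{larger} ball (radius $\rho^{-2}$), not the other way around as you wrote.
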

In the forthcoming proof of Theorem \ref{main1}, we consider only the endpoint $p_{0}=\frac{2(n+1)}{n-1}$. The estimate for the general range follows from the interpolation with the trivial estimate at $p=\infty$.
%
%
%
\subsection{}\label{subsection2.1}
In the first step of the proof, we will slice our surface into small pieces so that we can exploit local properties of a $k$-cone. Let $\{e_{i}\}_{i=1}^{n+k}$ be a collection of standard orthonormal bases in $\mathbb{R}^{n+k}$.
By a linear transformation, we may assume that $L_{0}=\mathrm{span}(e_{1},\ldots,e_{n})$ and $L_{i}=L_{0}+e_{n+i}$ for each $1\le i\le k$.

Fix a small parameter $\epsilon>0$. This $\epsilon$ is essentially the same as the one in the statement of Theorem \ref{main1}. We may also assume that $\epsilon^{-1}$ is a natural number. We define a sliced surface $\tilde{S}$ by
$$
\tilde{S}= S \cap (\mathbb{R}^{n} \times \{(\tau_{1},\ldots,\tau_{k}): c_{i} \leq \tau_{i} \leq c_{i}+4\delta^{\epsilon/2}	\}
),$$
for some $c_i$ with $1\le i\le k$.  We will prove the decoupling for the sliced surface $\tilde{S}$ first.

\begin{proposition}\label{1011prop2.2}
	If $\mathrm{supp}(\hat{f}) \subset \mathcal{N}_{\delta}\tilde{S}$, then
	$$\| f \|_{L^{p_{0}}(\mathbb{R}^{n+k})} \lesssim_{\epsilon} \delta^{-\epsilon}\biggl( \sum_{a \in \mathcal{M}_{\delta}} \|{\Xi_{a} * f}\|_{L^{p_{0}}(\mathbb{R}^{n+k})}^{2} \biggr)^{\frac{1}{2}}. $$
\end{proposition}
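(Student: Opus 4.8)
The plan is to reduce the estimate, by an iteration in the scale $\delta$ of the type used by Pramanik--Seeger, to the Bourgain--Demeter decoupling for $P^{n-1}$ recorded in Theorem \ref{b-d}. First I would record the geometry of $\tilde S$. Writing a point of $\tilde S$ as $a=(\sum_{i=0}^{k}\lambda_i b_i(\omega),\lambda)$, where $\omega\in S^{n-1}$ ranges over the admissible unit normals and $b_i(\omega)\in E_i$ is the point of $E_i$ with outward normal $\omega$, one sees that $\tilde S$ is \emph{ruled}: for fixed $\omega$ it is affine in $\lambda$ along the $k$-plane $\eta(a)$, and its unit normal at $a$ depends only on $\omega$. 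For each fixed $\lambda$ the map $\omega\mapsto\sum_i\lambda_i b_i(\omega)$ parametrizes a compact hypersurface $E_\lambda\subset\R^{n}$ that is a Minkowski combination of the $E_i$; since in the $k$-cone case every $E_i$ has positive definite second fundamental form --- this is exactly where the hypothesis of Theorem \ref{main1}, as opposed to that of Theorem \ref{main2}, enters --- so does $E_\lambda$. The sole point of having cut down to $\tau$-width $\delta^{\epsilon/2}$ is that $\lambda$ varies by only $\delta^{\epsilon/2}$ over $\tilde S$.

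Let $D(\delta)$ be the smallest constant for which the asserted inequality holds at scale $\delta$ for a slice of a $k$-cone, and set $\sigma=\delta^{1-\epsilon/2}$, so that $\sigma^{1/2}=\delta^{1/2-\epsilon/4}$ and $\sigma^{1/2}\cdot(\delta^{\epsilon/2})^{1/2}=\delta^{1/2}$. The iterative step is the inequality
$$
D(\delta)\ \le\ C_{\epsilon}\,\delta^{-\epsilon''}\,D(\sigma),
$$
with $\epsilon''>0$ as small as we wish. Its proof has two halves. Since $\mathcal{N}_{\delta}\tilde S\subset\mathcal{N}_{\sigma}\tilde S$, and since our slice is contained in $O(1)$ slices of $\tau$-width $\sigma^{\epsilon/2}$ (note $\sigma^{\epsilon/2}>\delta^{\epsilon/2}$), applying $D(\sigma)$ decouples $f$ into pieces $f_\tau$ whose Fourier supports lie in $\mathcal{N}_{\delta}$ of a cap of $\tilde S$ on which $\omega$ ranges over a ball of radius $\sigma^{1/2}$; by the nesting property $(A_{4})$ the caps $\Pi_{a,\delta}$ refine the caps $\Pi_{b,\sigma}$, so the two families are compatible. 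It then remains to decouple each $f_\tau$ into the final $\Pi_{a,\delta}$.

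For that, to a fixed $\sigma^{1/2}$-cap of $\tilde S$ I would attach the affine map $T$ that dilates the $\tau$-directions by $\delta^{-\epsilon/2}$ (so the slice becomes unit size), parabolically rescales the $\R^{n}$-block associated with the cap (the $n-1$ tangential directions by $\sigma^{-1/2}$, the normal direction by $\sigma^{-1}$), and then shears off the parts of $\sum_i\lambda_i b_i(\omega)$ that are affine in $\tau$ and in $\omega$ (after a harmless reparametrization of the $\omega$-variable). Because $\sum_i\lambda_i b_i(\omega)$ is \emph{linear} in $\lambda$, a Taylor expansion on the cap leaves, after $T$, only the unit paraboloid $P^{n-1}$ in the $\R^{n}$-variables plus error terms of size $O(\delta^{\epsilon/2})+O(\sigma^{1/2})\lesssim\delta^{\epsilon/2}$, which is also the rescaled thickness $\delta\,\sigma^{-1}$; hence $T$ sends $\mathcal{N}_{\delta}$ of the cap into $\mathcal{N}_{C\delta^{\epsilon/2}}$ of a $C^{2}$-small perturbation of $P^{n-1}$ times a box $[0,C]^{k}$. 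For such a cylinder the $\ell^{2}$-decoupling at the exponent $p_{0}=\tfrac{2(n+1)}{n-1}$ and scale $\delta^{\epsilon/2}$ follows by freezing the $k$ flat coordinates, invoking Theorem \ref{b-d} in each fibre $\R^{n}$ (a routine perturbation argument upgrades it from $P^{n-1}$ to a $C^{2}$-small perturbation), and recombining via Minkowski's inequality, which is licit since $p_{0}\ge 2$. Undoing $T$ converts the resulting $\delta^{\epsilon/4}$-caps into $\delta^{1/2}$-caps, i.e.\ into the $\Pi_{a,\delta}$, at a cost $\lesssim(\delta^{\epsilon/2})^{-\epsilon''}=\delta^{-\epsilon\epsilon''/2}$; summing over the $\sigma^{1/2}$-caps gives the displayed inequality.

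Iterating $\delta\mapsto\delta^{1-\epsilon/2}\mapsto\delta^{(1-\epsilon/2)^{2}}\mapsto\cdots$ until the scale first reaches a fixed $\delta_{0}$, where $D(\delta_{0})=O_{\epsilon}(1)$ by the trivial bound, the exponent of the accumulated $\delta$-loss is $\tfrac{\epsilon\epsilon''}{2}\sum_{j\ge0}(1-\epsilon/2)^{j}=\epsilon''$, while the product of the constants $C_{\epsilon}$ over the $O_{\epsilon}(\log\log(1/\delta))$ steps stays subpolynomial in $1/\delta$; choosing $\epsilon''$ small enough yields $D(\delta)\lesssim_{\epsilon}\delta^{-\epsilon}$, which is the Proposition. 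The step I expect to be the main obstacle is the geometric reduction that the preliminary discussion of this section is meant to supply: verifying that a $\delta^{\epsilon/2}$-slice of a $k$-cone genuinely becomes a $\delta^{\epsilon/2}$-neighbourhood of a cylinder over $P^{n-1}$ after the strongly anisotropic map $T$ --- in particular controlling how $T$ distorts the \emph{isotropic} $\delta$-neighbourhood, which works only because the normal to $\tilde S$ retains a size-$1$ component along $L_{0}$, and checking that every Taylor error indeed lands below the rescaled thickness --- together with the bookkeeping needed to keep the caps of consecutive generations nested through $(A_{4})$.
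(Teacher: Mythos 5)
Your proposal is correct and follows the paper's strategy: slice in the $\tau$-directions, iterate a cap-to-cap decoupling obtained by approximating the $k$-cone locally by a cylinder over a strictly convex graph, and invoke Bourgain--Demeter (Theorem~\ref{b-d}) together with Fubini; the geometric input you flag as the crux (that the good-tuple reparametrization keeps the rescaled slice inside a $\delta^{\epsilon/2}$-neighbourhood of $P^{n-1}\times[0,C]^k$) is exactly what Claim~\ref{claim} and the computation in Subsection~\ref{subsection2.3} supply. The only visible difference is bookkeeping: the paper iterates Proposition~\ref{1011prop2.3} over $O(1/\epsilon)$ fixed additive steps $\mu_l=l\epsilon/4$ with loss $\delta^{-\epsilon^3}$ each, while you iterate $\delta\mapsto\delta^{1-\epsilon/2}$ over $O(\log\log(1/\delta))$ multiplicative steps and then observe that the accumulated constants remain subpolynomial --- both work and yield the same conclusion.
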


The desired decoupling inequalities for the surface $S$ can be deduced from Proposition \ref{1011prop2.2}. To see this, let $\{\hat{\psi}_{j}\}_{j \in \mathbb{Z}}$ be a partition of unity of $\mathbb{R}$ such that 
\begin{displaymath}
\|{\psi_{j}}\|_{L^1(\mathbb{R})} \sim 1 \text{ and }\mathrm{supp}(\hat{\psi}_{j}) \subset [(j-2)\delta^{\epsilon/2},(j+2)\delta^{\epsilon/2}].
\end{displaymath} 
For each $J=(j_1, ..., j_k)\in \Z^k$, we define 
$$
f_{J}(x,t)=\int_{\mathbb{R}^{n}\times \mathbb{R}^{k}}\big[\prod_{i=1}^{k}\hat{\psi}_{j_{i}}(\tau_{i})\big]\hat{f}(\xi, \tau) e^{2\pi (x \cdot \xi+t \cdot \tau)} \, d\xi d\tau.
$$
Here $\tau=(\tau_1, \ldots, \tau_k)$.  Note that $|\{ J \in \mathbb{Z}^k : f_J \not\equiv 0 \} | = O(\delta^{-\epsilon k/2}) $. Hence, by the triangle inequality 
$$
\|f\|_{L^{p_{0}}(\R^{n+k})} \lesim \delta^{-\epsilon k/2} \max_{J\in \Z^k} \| {f_{J}}\|_{L^{p_{0}}(\mathbb{R}^{n+k})}.
$$
By Proposition \ref{1011prop2.2} and Young's inequality, the last expression can be further bounded by 
\begin{displaymath}
\begin{split}
\delta^{-2\epsilon k} \max_{J\in \R^k} \bigl( \sum_{a \in \mathcal{M}_{\delta}} \|{\Xi_{a} * {f_J}}\|_{L^{p_{0}}(\mathbb{R}^{n+k})}^2 \bigr)^{\frac{1}{2}}
\lesssim_{\epsilon} \delta^{-2\epsilon k} \bigl( \sum_{a \in \mathcal{M}_{\delta}} \| {\Xi_{a} * f}\|_{L^{p_{0}}(\mathbb{R}^{n+k})}^2\bigr)^{\frac{1}{2}}.
\end{split}
\end{displaymath}
Hence, what remains is to show Proposition \ref{1011prop2.2}.

\subsection{}\label{subsection2.2}
Our argument relies on an iteration. This iteration argument first appeared in Pramanik and Seeger \cite{PS}. We will deduce Proposition \ref{1011prop2.2} from the following proposition.
\begin{proposition}\label{1011prop2.3}
	Fix $\mu$ such that $2\mu+\epsilon/2 \leq 1$ and $\mu \geq \epsilon/2$. Let $a \in \mathcal{M}_{\delta^{2\mu}}$. 
	If $\mathrm{supp}(\hat{f}) \subset \mathcal{N}_{\delta}\tilde{S}$, then
	\begin{displaymath}
	\|{\Xi_{a} * {f}}\|_{L^{p_{0}}(\mathbb{R}^{n+k})} 
	\lesssim_{\epsilon}  \delta^{-{\epsilon^3}} 
	\biggl( \sum_{b\in \mathcal{M}_{\delta^{2\mu+\epsilon/2} }} \|{\Xi_{a}*\Xi_{b} * {f}}\|_{L^{p_{0}}(\mathbb{R}^{n+k})}^{2} \biggr)^{\frac{1}{2}}.
	\end{displaymath}
\end{proposition}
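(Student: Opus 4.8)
The plan is to run a single step of the Pramanik--Seeger iteration, localizing to a slab $\Pi_{a,\delta^{2\mu}}$ of the sliced surface $\tilde S$ and applying the Bourgain--Demeter decoupling (Theorem \ref{b-d}) after a suitable affine rescaling. First I would use Assumption $(A_4)$: since $\delta\le\delta^{2\mu}$ and the boxes $\Pi_{b,\delta^{2\mu+\epsilon/2}}$ with $b\in\mathcal M_{\delta^{2\mu+\epsilon/2}}$ intersecting $\Pi_{a,\delta^{2\mu}}$ satisfy $\Pi_{b,\delta^{2\mu+\epsilon/2}}\subset C_3\Pi_{a,\delta^{2\mu}}$, the function $\Xi_a*f$ has Fourier support in (a dilate of) $\Pi_{a,\delta^{2\mu}}$, and it is the union of the pieces $\Xi_a*\Xi_b*f$ over those finitely-overlapping $b$'s that actually contribute. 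Thus the right-hand side genuinely decouples $\Pi_{a,\delta^{2\mu}}$ into the smaller plates $\Pi_{b,\delta^{2\mu+\epsilon/2}}$.

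The key geometric point is that inside the slab $\Pi_{a,\delta^{2\mu}}$, after we quotient out the $k$ long (cone) directions of $\eta(a)$ and rescale the $(n-1)$ mid-length directions by $\delta^{-\mu}$ and the normal direction by $\delta^{-2\mu}$, the piece of $\tilde S$ inside $\Pi_{a,\delta^{2\mu}}$ becomes (a small perturbation of) the standard truncated paraboloid $P^{n-1}$ in $\mathbb R^n$, at the new scale $\delta'=\delta^{1-2\mu}$, or more precisely at scale $\delta^{\epsilon/2}$ in the rescaled picture once we track how $\delta^{2\mu+\epsilon/2}$ plates transform. Here is where the hypotheses $2\mu+\epsilon/2\le 1$ and $\mu\ge\epsilon/2$ enter: the first guarantees the smaller scale $\delta^{2\mu+\epsilon/2}$ is still $\ge\delta$ so we are decoupling at a legitimate scale, and the second guarantees the rescaled object is flat enough ($\delta^{2\mu}$-flat transverse to $\eta(a)$) that it is well-approximated by a paraboloid with the rescaled plates $\Pi_{b,\delta^{2\mu+\epsilon/2}}$ becoming the standard $\delta^{\epsilon/2}\times\delta^{\epsilon/4}\times\cdots$ caps of $\mathcal P_{\delta^{\epsilon/2}}$. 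Crucially the $k$ cone-directions are \emph{frozen} along $\eta(a)$ up to the allowed $C\delta^{2\mu}$ thickness — this is precisely the content of the slab having $k$ long sides parallel to $\eta(a)$ — so the decoupling is effectively an $n$-dimensional statement for the paraboloid, uniformly in the $\mathbb R^k$ fiber variable (one applies Bourgain--Demeter on each fiber and integrates, or equivalently uses that decoupling for $P^{n-1}$ implies decoupling for the cylinder $P^{n-1}\times\mathbb R^k$). We apply Theorem \ref{b-d} at the endpoint $p_0=\frac{2(n+1)}{n-1}$, which yields the factor $\lesssim_\epsilon (\delta^{\epsilon/2})^{-\epsilon}=\delta^{-\epsilon^2/2}\le\delta^{-\epsilon^3}$ for $\epsilon$ small (or more honestly one checks the bookkeeping produces a power of $\delta^{\epsilon/2}$ raised to the Bourgain--Demeter $\epsilon$-loss, which is absorbed into $\delta^{-\epsilon^3}$).

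The main obstacle is the affine-invariance/flatness bookkeeping: verifying that the piece of $\tilde S$ over $\Pi_{a,\delta^{2\mu}}$, with the cone directions projected out, is a graph over $\mathbb R^{n-1}$ whose Hessian is uniformly comparable to a fixed nondegenerate (possibly indefinite) quadratic form, and that the rescaling map sending $\Pi_{a,\delta^{2\mu}}$ to the unit cube sends the family $\{\Pi_{b,\delta^{2\mu+\epsilon/2}}\}$ exactly onto a cover of type $\mathcal P_{\delta^{\epsilon/2}}$ of $\mathcal N_{\delta^{\epsilon/2}}H_v^{n-1}$ — here Assumptions $(A_1)$--$(A_3)$ are used to control the dimensions, separation, and overlap of the rescaled plates, and the non-vanishing Gaussian curvature of $E_0$ gives the nondegeneracy. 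Since $H_v^{n-1}$ and $P^{n-1}$ are affinely equivalent, Theorem \ref{b-d} applies to $H_v^{n-1}$ with the same exponent, and the proof is complete. I would then note that Proposition \ref{1011prop2.3} is exactly one step of the iteration whose composition over $\sim\epsilon^{-1}$ scales (starting from $\delta^{2\mu}=\delta^{\epsilon}$ and ending at scale $\delta$) yields Proposition \ref{1011prop2.2}, with the total loss $(\delta^{-\epsilon^3})^{O(\epsilon^{-1})}=\delta^{-O(\epsilon^2)}\lesssim\delta^{-\epsilon}$.
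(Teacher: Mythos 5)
Your proposal follows the same overall strategy as the paper: localize to a ball of radius $\delta^{-(2\mu+\epsilon/2)}$, show the piece of $\tilde S$ inside $\Pi_{a,\delta^{2\mu}}$ lies within $\delta^{2\mu+\epsilon/2}$ of a cylinder over an $n$-dimensional cap of $E_0$, and then apply the Bourgain--Demeter paraboloid decoupling together with Fubini in the $k$ cylinder directions; the $\epsilon$-loss bookkeeping you sketch also matches the paper.

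However, there is a genuine gap in your justification of the flattening. You assert that the $k$ cone-directions being ``frozen'' along $\eta(a)$ is ``precisely the content of the slab having $k$ long sides parallel to $\eta(a)$.'' That confuses cause and effect: the slab dimensions only record the shape of the Fourier box $\Pi_{a,\delta^{2\mu}}$, not that $\tilde S\cap\Pi_{a,\delta^{2\mu}}$ actually stays $\delta^{2\mu+\epsilon/2}$-close to a cylinder. As $\xi'$ ranges over a $\delta^\mu$-cap of $E_0$, the $k$-plane $\eta(x_0,\ldots,x_k)$ through the corresponding good tuple tilts, and one must verify the tilt is $O(\delta^{2\mu})$. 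The paper does this via Claim~\ref{claim}: an implicit-function-theorem argument produces smooth maps $h_i$ with $\nabla G_i(h_i(\xi'))=\nabla G_0(\xi')$ (so $P_i(\xi')=(h_i(\xi'),G_i(h_i(\xi')),\ldots)$ is the point of $E_i$ sharing a normal with $P_0(\xi')\in E_0$) and with the asymptotic $h_i(\xi')=J_i\xi'+O(|\xi'|^2)$, $J_i$ positive definite. One then parametrizes $\tilde S\cap\Pi_{a,\delta^{2\mu}}$ by the convex combinations $(1-\sum_j\theta_j)P_0(\xi')+\sum_j\theta_j P_j(\xi')$ and checks by Taylor expansion that the distance to the cylinder is $\lesssim\delta^{2\mu+\epsilon/2}$, using both $|\xi'|\lesssim\delta^\mu$ and $0\le\theta_j\le\delta^{\epsilon/2}$. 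This verification is the real content of the proof and cannot be replaced by an appeal to the slab geometry. (A minor additional slip: for $k$-cones each $E_i$ is the boundary of a convex body, so the Hessians here are definite; the ``possibly indefinite'' case you mention belongs to Theorem~\ref{main2}, not to this proposition.)
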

We postpone the proof of Proposition \ref{1011prop2.3} to the next subsection, and continue by 
\begin{proof}[Proof of Proposition \ref{1011prop2.2}.]
First of all, by the triangle inequality and H\"older's inequality, we obtain 
\[
\| {f}\|_{p_{0}} \lesssim \delta^{-C\epsilon} \bigl( \sum_{a \in \mathcal{M}_{\delta^{\epsilon}}} \|{\Xi_{a} * f}\|_{p_{0}}^{2}\bigr)^{\frac{1}{2}},
\]
for some large constant $C>0$. Next, by applying Proposition \ref{1011prop2.3} with $\mu=\epsilon/2$, the last expression can be further bounded by 
\begin{displaymath}
\begin{split}
\delta^{-{\epsilon}^3-C\epsilon} 
\bigl(\sum_{a \in \mathcal{M}_{\delta^{\epsilon}}}\sum_{b \in \mathcal{M}_{\delta^{\frac{3\epsilon}{2}}} } \| {\Xi_{a}* \Xi_{b} * {f}}\|			_{p_{0}}^{2}\bigr)^{\frac{1}{2}}
\lesssim_{\epsilon}
\delta^{-{\epsilon}^3-C\epsilon} \bigl(\sum_{b \in \mathcal{M}_{\delta^{\frac{3\epsilon}{2}}} } \| { \Xi_{b} * {f}}\|			_{p_{0}}^{2}\bigr)^{\frac{1}{2}}. 
\end{split}
\end{displaymath}
The last inequality follows from \[|\{ a \in \mathcal{M}_{\delta^{\epsilon}} : \Pi_{a,\delta^{\epsilon}} \cap \Pi_{b,\delta^{3\epsilon/2}} \cap \mathcal{N}_{\delta}S \neq \emptyset \} | = O(1),\] which further follows from Assumption $A_3$ and Assumption $A_4$. Repeatedly apply Proposition 2.4 with $\mu = \mu_{l} = \frac{l}{4}\epsilon$ starting with $l=3$ until $l=\frac{2}{\epsilon}-1$. In the end we have
\begin{displaymath}
\| f\|_{p_{0}} \lesssim_{\epsilon} \delta^{ -2C\epsilon} \bigl( \sum_{a \in \mathcal{M}_{\delta}}\|\Xi_{a}*f \|_{p_{0}}^{2}\bigr)^{\frac{1}{2}}.
\end{displaymath} 
This finishes the proof of Proposition \ref{1011prop2.2}.
\end{proof}

\subsection{}\label{subsection2.3} In this subsection, we will prove Proposition \ref{1011prop2.3} by using Theorem \ref{b-d}. Let $B\subset \R^{n+k}$ be a ball of radius $r_B:=\delta^{-(2\mu+\frac{\epsilon}{2})}$, centered at $c_B$. Let $C$ be a large constant. Define a weight $w_{B}$ associated with the ball $B$ by $(1+\frac{\cdot - c_B}{r_B})^{-C}$. To prove Proposition \ref{1011prop2.3}, by a simple localisation argument, it suffices to prove 
\beq\label{localised}
\|{\Xi_{a} * {f}}\|_{L^{p_{0}}(w_{B})} 
	\lesssim_{\epsilon}  \delta^{-{\epsilon^3}} 
	\biggl( \sum_{b\in \mathcal{M}_{\delta^{2\mu+\epsilon/2} }} \|{\Xi_{a}*\Xi_{b} * {f}}\|_{L^{p_{0}}(w_B)}^{2} \biggr)^{\frac{1}{2}}.
\endeq
Let $a \in \eta(y_{0},\ldots,y_{k})$ for some good $(k+1)$-tuple $(y_{0},\ldots,y_{k})$. Under certain translation and rotation, we may assume that $y_0$ lies in the origin and 
$$
y_i=(\underbrace{0, \ldots, 0}_{n \text{ copies }}, \underbrace{0, \ldots, 0}_{i-1 \text{ copies }}, 1, 0, \ldots, 0) \text{ for each } 1\le i\le k.
$$
Moreover, we assume that the normal vector to the surface $S$ at the point $y_i$ is given by $e_n$ for every $0\le i\le k$.
%
By using a partition of unity, we may assume, that each $E_i$, viewed as a hypersurface in $L_i$, can be represented as the graph of a smooth function $G_{i}: (-\epsilon_0, \epsilon_0)^{n-1}\to \R$ for some small constant $\epsilon_0>0$ that might vary from line to line. Under these assumptions, we observe that $\nabla G_i(0)=(0, \ldots, 0)\in \R^{n-1}$ for each $i$. Moreover, for those points that are different from the origin, we have 
\begin{claim}\label{claim}
For each $1\le i\le k$, there exists a smooth function $h_{i}:(-\epsilon_0,\epsilon_0)^{n-1} \to \mathbb{R}^{n-1}$ such that 
$$
\nabla G_{i}(h_{i}(\xi'))= \nabla G_{0}(\xi') \text{ for all }\xi' \in (-\epsilon_0,\epsilon_0)^{n-1}.
$$
Moreover, $h_{i}(\xi')=J_i\cdot \xi' +O(|\xi'|^2)$ for some positive definite matrix $J_i$. 
\end{claim}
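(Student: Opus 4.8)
The plan is to produce $h_i$ as the composition of $\nabla G_0$ with a local inverse of the Gauss-type map $\xi'\mapsto\nabla G_i(\xi')$. The starting observation is that the non-vanishing Gaussian curvature of $E_i$ is equivalent to $\det\nabla^2 G_i(\xi')\neq 0$ for every $\xi'$ in the domain of $G_i$; thus the smooth map $\nabla G_i\colon(-\epsilon_0,\epsilon_0)^{n-1}\to\mathbb{R}^{n-1}$ has everywhere invertible differential $\nabla^2 G_i$. Since moreover $\nabla G_i(0)=0$, the inverse function theorem gives a smooth local inverse $(\nabla G_i)^{-1}$ defined on a ball $B(0,r_i)\subset\mathbb{R}^{n-1}$ with $(\nabla G_i)^{-1}(0)=0$, taking values in a neighbourhood of $0$ inside the domain of $G_i$. (Geometrically, $(\nabla G_i)^{-1}\circ\nabla G_0$ is exactly the map matching a point of $E_0$ to the point of $E_i$ sharing the same outward normal, so this composition is what the definition of a good $(k+1)$-tuple forces on us.)

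Next I would shrink $\epsilon_0$ — which, as the text already notes, is allowed to change from line to line — so that $|\nabla G_0(\xi')|<r_i$ for all $\xi'\in(-\epsilon_0,\epsilon_0)^{n-1}$ and all $1\le i\le k$; this is possible since $\nabla G_0$ is continuous and vanishes at the origin. I then define
\[
h_i(\xi'):=(\nabla G_i)^{-1}\bigl(\nabla G_0(\xi')\bigr),\qquad \xi'\in(-\epsilon_0,\epsilon_0)^{n-1}.
\]
This is smooth as a composition of smooth maps, satisfies $\nabla G_i(h_i(\xi'))=\nabla G_0(\xi')$ by construction, and obeys $h_i(0)=(\nabla G_i)^{-1}(0)=0$ because $\nabla G_0(0)=0$. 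This establishes the first assertion of Claim~\ref{claim}.

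For the expansion, differentiating the identity $\nabla G_i\circ h_i=\nabla G_0$ by the chain rule and evaluating at $\xi'=0$ yields $\nabla^2 G_i(0)\,Dh_i(0)=\nabla^2 G_0(0)$, so $Dh_i(0)=(\nabla^2 G_i(0))^{-1}\nabla^2 G_0(0)=:J_i$; since $h_i$ is smooth with $h_i(0)=0$, Taylor's theorem then gives $h_i(\xi')=J_i\xi'+O(|\xi'|^2)$. Positive definiteness of $J_i$ is the one place where the hypotheses specific to $k$-cones enter: each body $F_i$ (including $F_0$) is bounded and strictly convex, so $\nabla^2 G_0(0)$ and $\nabla^2 G_i(0)$ are positive definite symmetric matrices, whence $J_i=(\nabla^2 G_i(0))^{-1}\nabla^2 G_0(0)$ is conjugate to the symmetric positive definite matrix $(\nabla^2 G_i(0))^{-1/2}\nabla^2 G_0(0)(\nabla^2 G_i(0))^{-1/2}$ and in particular has positive eigenvalues — this is the sense of ``positive definite'' intended here.

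I do not anticipate a genuine obstacle: the argument is a textbook application of the inverse function theorem coupled with the curvature assumption. The only mildly delicate points are the bookkeeping of the shrinking neighbourhood (ensuring $\nabla G_0$ maps into the domain of $(\nabla G_i)^{-1}$) and, if one insists on the strictly symmetric notion, the interpretation of positive definiteness for the a priori non-symmetric matrix $J_i$, which the conjugacy above resolves.
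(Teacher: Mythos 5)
Your proof is correct and follows essentially the same route as the paper: the paper invokes the implicit function theorem on the level sets $\{\eta' : \nabla G_i(\eta') = \nabla G_0(\xi')\}$, which is the same as your inverse-function-theorem construction of $h_i = (\nabla G_i)^{-1}\circ\nabla G_0$, and both derive $J_i$ by differentiating $\nabla G_i\circ h_i = \nabla G_0$ at the origin. One small thing you do more carefully: the paper simply asserts that since $HG_i$ and $HG_0$ are positive definite and $HG_i\,\nabla h_i = HG_0$, the matrix $\nabla h_i$ is ``positive definite,'' whereas you correctly observe that $(HG_i)^{-1}HG_0$ is in general non-symmetric and justify the positivity via conjugacy to $(HG_i)^{-1/2}HG_0(HG_i)^{-1/2}$ — this is a useful clarification, since ``positive definite'' here must be read as ``has positive eigenvalues.''
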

\begin{proof}
For each $\xi'$, let us consider the level set $\{\eta'\in \R^{n-1}: \nabla G_i(\eta')=\nabla G_0(\xi')\}$. Note that this set is not empty. Recall that $G_i$ is a strictly convex smooth function. Hence the existence and smoothness of $h_i$ can be guaranteed by the implicit function theorem.

To obtain an asymptotic of the function $h_i$ near the origin, we differentiate both sides of the equation $\nabla G_{i}(h_{i}(\xi'))= \nabla G_{0}(\xi')$, and obtain $(HG_{i})(\nabla h_{i}) = HG_{0}$. Here $HG_{i}$ is the Hessian matrix of the function $G_{i}$. Since $E_{i}$ is strictly convex, $HG_{i}$ is a positive definite matrix. Thus, $\nabla h_{i}$ is also a positive definite matrix. The identity $h_{i}(\xi')=J_i\cdot \xi'+O(|\xi'|^2)$, with some positive definite matrix $J_i$, immediately follows from Taylor's theorem. This completes the proof of the claim.  
\end{proof}

Denote $h_0(\xi')=\xi'$. By Claim \ref{claim}, if $\epsilon_0$ is chosen small enough, then a good $(k+1)$-tuple containing $(\xi', G_0(\xi'), 0, \ldots, 0):=P_0(\xi')$ also contains 
$$
(h_i(\xi'), G_i(h_i(\xi')), \underbrace{0, \ldots, 0}_{i-1 \text{ copies }}, 1, 0, \ldots, 0):=P_i(\xi').
$$
for each $1\le i\le k$. Hence, w.l.o.g. we may assume that the $k$-cone $\tilde{S}\cap \Pi_{a, \delta^{2\mu}}$ is given by 
\begin{equation}\label{cone-part}
\begin{gathered}
\{(1-\sum_{j=1}^{k}\theta_{j})P_0(\xi')+ \sum_{j=1}^{k}\theta_{j} P_j(\xi') : |\xi'| \lesssim \delta^{\mu} ,\; 0 \leq \theta_{j} \leq \delta^{\epsilon/2}	\}.
\end{gathered}
\end{equation}
We claim that  the $k$-cone given by \eqref{cone-part} is contained in the $\delta^{2\mu+\frac{\epsilon}{2}}$ neighbourhood of a cylinder. To be precise, we will use the cylinder
\beq\label{cylinder}
\{P_0(\xi')+\sum_{j=1}^k \theta_j e_{n+j}: |\xi'|\lesim \delta^{\mu}, 0\le \theta_j\le \delta^{\epsilon/2}\}.
\endeq
That is, we will show that, given an arbitrary point on the $k$-cone \eqref{cone-part}, its distance with the cylinder \eqref{cylinder} is smaller than $\delta^{2\mu+\frac{\epsilon}{2}}$. Given a point in \eqref{cone-part}, we write it as 
$$
((1-\sum_{j=1}^k \theta_j) \xi'+\sum_{j=1}^k \theta_j h_j(\xi'), (1-\sum_{j=1}^k \theta_j) G_0(\xi')+\sum_{j=1}^k \theta_j G_j(h_j(\xi')), \theta_1, \ldots, \theta_k).
$$
We calculate its distance with the point 
$$
\Big((1-\sum_{j=1}^k \theta_j) \xi'+\sum_{j=1}^k \theta_j h_j(\xi'), G_0\Big((1-\sum_{j=1}^k \theta_j) \xi'+\sum_{j=1}^k \theta_j h_j(\xi')\Big), \theta_1, \ldots, \theta_k\Big)
$$
from the cylinder \eqref{cylinder}. This amounts to proving 
$$
\Big|G_0\Big((1-\sum_{j=1}^k \theta_j) \xi'+\sum_{j=1}^k \theta_j h_j(\xi')\Big)-(1-\sum_{j=1}^k \theta_j) G_0(\xi')+\sum_{j=1}^k \theta_j G_j(h_j(\xi'))\Big| \lesim \delta^{2\mu+\frac{\epsilon}{2}}.
$$
By the triangle inequality, it suffices to show 
$$
\Big|G_0\Big((1-\sum_{j=1}^k \theta_j) \xi'+\sum_{j=1}^k \theta_j h_j(\xi')\Big)-G_0(\xi')\Big| \lesim \delta^{2\mu+\frac{\epsilon}{2}}
$$
and 
$$
|G_0(\xi')-G_j(h_j(\xi'))| \lesim \delta^{2\mu}.
$$
The latter follows directly from Taylor's formula. To prove the former estimate, we write $G_0(\xi')=(\xi')^T [HG_0(0)] \xi'+O(|\xi'|^3)$ with $HG_0(0)$ the Hessian matrix of the function $G_0$ at the origin. Moreover, we know that $HG_0(0)$ is positive definite. Using this formula, we just need to show that 
$$
\Big|(1-\sum_{j=1}^k \theta_j) \xi'+\sum_{j=1}^k \theta_j h_j(\xi')-\xi'\Big| \lesim \delta^{\mu+\frac{\epsilon}{2}},
$$
which follows via a direct calculation. \\

So far we have verified that the $k$-cones \eqref{cone-part} lies in a $\delta^{2\mu+\frac{\epsilon}{2}}$-neighbourhood of the cylinder \eqref{cylinder}. Hence to prove the localised decoupling inequality \eqref{localised}, by the uncertainly principle, it is the same as proving a corresponding decoupling inequality associated with the cylinder \eqref{cylinder}, which further follows from Theorem \ref{b-d} and Fubini's theorem. This finishes the proof of Proposition \ref{1011prop2.3}.

\section{Proof of Theorem 1.2}\label{sec3}

In this section, we use the notations defined at the beginning of Section \ref{sec2}. The proof of Theorem \ref{main2} essentially follows via the same argument as that of Theorem \ref{main1}. Hence we will only write down the relevant estimates and omit most of the details. \\

To prove Theorem \ref{main2}, we will use the following theorem due to Bourgain and Demeter.

\begin{theorem}[\cite{BD1}]\label{b-d2}
	Denote $p_{0}=\frac{2(n+1)}{n-1}$. Fix $v \in (\mathbb{R}\setminus \{0\})^{n-1}$. If $\mathrm{supp}(\hat{f}) \subset \mathcal{N}_{\delta}H_{v}^{n-1}$, then 
	\begin{equation}\label{eq b-d2}
	\|f\|_{L^{p_{0}}(\mathbb{R}^{n})} \lesssim_{\epsilon} \delta^{\frac{n}{p_{0}}-\frac{n-1}{2}-\epsilon}\biggl(\sum_{a \in \mathcal{M}_{\delta}} \| \Xi_{a}' * f \|_{L^{p_{0}}(\mathbb{R}^{n})}^{p_{0}}\biggr)^\frac{1}{p_{0}},
	\end{equation}
	for every $\epsilon>0$.
\end{theorem}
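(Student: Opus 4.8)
The plan is to deduce Theorem~\ref{b-d2} from the sharp $\ell^2$-decoupling theorem of Bourgain and Demeter for hypersurfaces with non-vanishing Gaussian curvature, and then to convert the $\ell^2$-sum on the right-hand side into the $\ell^{p_0}$-sum by H\"older's inequality in the cap index. The only arithmetic that needs checking is that the power of $\delta$ produced by the number of caps is exactly $\delta^{\frac{n}{p_0}-\frac{n-1}{2}}$, so that the stated loss matches.

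First I would record that $H_v^{n-1}$ has non-vanishing Gaussian curvature: its second fundamental form is proportional to $\mathrm{diag}(v_1,\dots,v_{n-1})$, which is invertible because $v\in(\mathbb{R}\setminus\{0\})^{n-1}$. Concretely, the invertible linear map $(\xi',\xi_n)\mapsto(|v_1|^{1/2}\xi_1,\dots,|v_{n-1}|^{1/2}\xi_{n-1},\xi_n)$ takes $H_v^{n-1}$ to a compact piece of the paraboloid $\{\eta_n=\sum_i\mathrm{sgn}(v_i)\eta_i^2\}$, takes $\mathcal{N}_\delta H_v^{n-1}$ to a set comparable to the $\delta$-neighbourhood of that paraboloid, and takes the boxes $\Pi'_{a,\delta}$ to boxes of comparable dimensions; since the decoupling constant is unchanged under invertible affine changes of variables, it suffices to have $\ell^2L^{p_0}$-decoupling for that (possibly indefinite) paraboloid, which is the theorem of Bourgain and Demeter in \cite{BD1}, and which reduces to Theorem~\ref{b-d} when all $v_i>0$. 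This yields
\[
\|f\|_{L^{p_0}(\mathbb{R}^n)} \lesssim_\epsilon \delta^{-\epsilon}\Bigl(\sum_{a\in\mathcal{M}_\delta}\|\Xi_a'*f\|_{L^{p_0}(\mathbb{R}^n)}^2\Bigr)^{1/2}.
\]
Next, since the boxes $\Pi'_{a,\delta}$ project to essentially disjoint $\delta^{1/2}$-cubes in $\{|\xi_i|\le1\}$, one has $\#\mathcal{M}_\delta\sim\delta^{-(n-1)/2}$, and H\"older's inequality in $a$ (valid as $p_0\ge2$) bounds the $\ell^2$-sum by $(\#\mathcal{M}_\delta)^{\frac12-\frac1{p_0}}$ times the $\ell^{p_0}$-sum, that is, by a factor $\delta^{-\frac{n-1}{2}(\frac12-\frac1{p_0})}$. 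Using $p_0=\frac{2(n+1)}{n-1}$, so that $\frac{n+1}{p_0}=\frac{n-1}{2}$, one computes $\frac{n-1}{2}\bigl(\frac12-\frac1{p_0}\bigr)=\frac1{p_0}=\frac{n-1}{2}-\frac{n}{p_0}$, which turns the last two estimates into \eqref{eq b-d2} (the $\delta^{-\epsilon}$ from $\ell^2$-decoupling being absorbed).

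I expect essentially no genuine obstacle here; the asserted estimate is strictly weaker than sharp $\ell^2$-decoupling. The one point that requires care is that Theorem~\ref{b-d} as quoted concerns the elliptic paraboloid $P^{n-1}$, whereas $H_v^{n-1}$ may be genuinely indefinite — this is handled by the linear rescaling above together with the affine invariance of the decoupling constant (equivalently, by invoking the non-vanishing-Gaussian-curvature version in \cite{BD1} directly), and by the routine verification that the rescaling matches the two families of caps up to bounded factors. Everything else is the elementary identity $\frac{n}{p_0}-\frac{n-1}{2}=-\frac1{p_0}$ that makes the H\"older loss coincide with the claimed power of $\delta$.
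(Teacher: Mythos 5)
Your proposal has a genuine gap: the first step claims that $\ell^2 L^{p_0}$ decoupling with loss $\delta^{-\epsilon}$ holds for $H_v^{n-1}$ whenever $v\in(\mathbb{R}\setminus\{0\})^{n-1}$, and this is false as soon as the $v_i$ have mixed signs. The paper does not prove Theorem~\ref{b-d2}; it simply quotes it from \cite{BD1}, and what \cite{BD1} proves for hypersurfaces whose curvature is nonvanishing but indefinite is precisely the $\ell^{p_0}L^{p_0}$ inequality \eqref{eq b-d2}, not an $\ell^2$ inequality at the same exponent. The two are genuinely different in strength here, and your Hölder step goes the wrong way: it converts a (false) strong $\ell^2$ bound into the weaker $\ell^{p_0}$ one.

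To see why the $\ell^2$ statement fails, take $n=3$ and the model $H=\{\xi_3=\xi_1\xi_2\}$, which contains the ruling line $\ell=\{(\xi_1,0,0)\}$. Let $\hat f$ be the indicator of the slab $\{|\xi_1|\le 1,\ |\xi_2|\le\delta,\ |\xi_3-\xi_1\xi_2|\le\delta\}\subset\mathcal{N}_{C\delta}H$, which is covered by $\sim\delta^{-1/2}$ caps $\theta$, each contributing a $\delta^{1/2}\times\delta\times\delta$ piece. One computes $\|f\|_{L^p}\approx\delta^{2-2/p}$ while $\bigl(\sum_\theta\|f_\theta\|_{L^p}^2\bigr)^{1/2}\approx\delta^{9/4-5/(2p)}$, so any $\ell^2 L^p$ decoupling constant must be at least $\delta^{-(1/4-1/(2p))}$, which is $\delta^{-1/8}$ at $p_0=4$ — a power loss, not $\delta^{-\epsilon}$. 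By contrast, $\bigl(\sum_\theta\|f_\theta\|_{L^{p_0}}^{p_0}\bigr)^{1/p_0}\approx\delta^{5/2-3/p_0}$, so the same example forces an $\ell^{p_0}$ constant of at least $\delta^{-(1/2-1/p_0)}=\delta^{\frac{n}{p_0}-\frac{n-1}{2}}$, which matches \eqref{eq b-d2} exactly. In other words, the $\delta^{\frac{n}{p_0}-\frac{n-1}{2}}$ loss in the $\ell^{p_0}$ inequality is sharp and is \emph{exactly} the factor that a ruling line costs; it cannot be recovered by Hölder from an $\ell^2$ bound with subpolynomial loss, because no such $\ell^2$ bound exists for indefinite signature.

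Your affine rescaling observation and the curvature check are fine, and if all $v_i>0$ the argument does reduce correctly to Theorem~\ref{b-d} followed by Hölder. But the theorem is needed in this paper precisely for indefinite $v$ (Theorem~\ref{main2} allows $E_0$ with mixed principal curvatures), so the elliptic case does not suffice. The right move is to quote the $\ell^p$ decoupling for nonvanishing-Gaussian-curvature hypersurfaces from \cite{BD1} directly, as the paper does.
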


The role of Theorem \ref{b-d2} in the proof of Theorem \ref{main2} is similar to that of Theorem \ref{b-d} in the proof of Theorem \ref{main1}.
However, in contrast with the proof of Theorem \ref{main1}, we need a rescaled version of Theorem \ref{b-d2}. This is because the exponent of $\delta$ in \eqref{eq b-d2} is not arbitrarily small, which requires us to carefully deal with the exponent of $\delta$ there.

By performing simple parabolic rescaling to Theorem \ref{b-d2}, we have the following proposition. The interested
reader should consult the proof of Propositon 4.1 in \cite{BD2} for details.

\begin{proposition}\label{rescalied b-d2}
	Denote $p_{0}=\frac{2(n+1)}{n-1}$. Fix $v \in (\mathbb{R} \setminus 0)^{n-1}$ and $\alpha,\mu>0$. If $\mathrm{supp}(\hat{f}) \subset \mathcal{N}_{\delta^{2\mu+\alpha}}H_{v}^{n-1} \cap (\{(\xi_{1},\ldots,\xi_{n-1}) : |\xi_{i}| \leq \delta^{\mu}\} \times \mathbb{R})$, then
	$$\|f\|_{L^{p_{0}}(\mathbb{R}^{n})} \lesssim_{\epsilon}
	\delta^{\alpha(\frac{n}{p_{0}}-\frac{n-1}{2})-\epsilon}\biggl(\sum_{a \in \mathcal{M}_{\delta^{2\mu+\alpha}}} \| \Xi_{a}' * f \|_{L^{p_{0}}(\mathbb{R}^{n})}^{p_{0}}\biggr)^\frac{1}{p_{0}},
	$$
	for every $\epsilon>0$.		
\end{proposition}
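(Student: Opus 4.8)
The plan is to deduce Proposition~\ref{rescalied b-d2} from Theorem~\ref{b-d2} by an affine (parabolic) change of variables that rescales the thin slab of the hyperbolic paraboloid back to a full $\delta^{-\epsilon}$-type neighborhood, exactly as in Proposition~4.1 of \cite{BD2}. First I would fix $f$ with $\mathrm{supp}(\hat f)\subset \mathcal{N}_{\delta^{2\mu+\alpha}}H_v^{n-1}\cap(\{|\xi_i|\le\delta^\mu\}\times\mathbb{R})$ and introduce the anisotropic dilation $T\colon (\xi',\xi_n)\mapsto (\delta^{-\mu}\xi',\delta^{-2\mu}\xi_n)$. The point is that $T$ maps $H_v^{n-1}$ to itself (since $v_i(\delta^{-\mu}\xi_i)^2 = \delta^{-2\mu}v_i\xi_i^2$), it maps the slab $\{|\xi_i|\le\delta^\mu\}$ to the unit-size region $\{|\xi_i|\le 1\}$, and it maps the $\delta^{2\mu+\alpha}$-neighborhood in the $\xi_n$-direction to a $\delta^\alpha$-neighborhood. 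So if $g$ is defined by $\hat g = \hat f\circ T^{-1}$ (equivalently $g$ is a suitably normalized dilate of $f$), then $\mathrm{supp}(\hat g)\subset \mathcal{N}_{\delta^\alpha}H_v^{n-1}$ and Theorem~\ref{b-d2} applies to $g$ with $\delta$ there replaced by $\delta^\alpha$, giving the gain $\delta^{\alpha(\frac{n}{p_0}-\frac{n-1}{2})-\epsilon}$.

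The key steps, in order, are: (i) record how the caps transform --- a $\delta^\alpha\times\delta^{\alpha/2}\times\cdots$ cap $\Pi'_{a,\delta^\alpha}$ for $H_v^{n-1}$ pulls back under $T^{-1}$ to a box of dimensions comparable to $\delta^{2\mu+\alpha}\times\delta^{\mu+\alpha/2}\times\cdots\times\delta^{\mu+\alpha/2}$, which is precisely the shape of a cap $\Pi'_{b,\delta^{2\mu+\alpha}}$ for the rescaled surface intersected with the $\delta^\mu$-slab (so the index sets $\mathcal{M}_{\delta^\alpha}$ for $g$ and $\mathcal{M}_{\delta^{2\mu+\alpha}}$ for $f$ match up under the dilation, up to the finitely-overlapping bookkeeping built into Assumption~(A)); (ii) track the $L^{p_0}$ norms under the change of variables --- both $\|g\|_{p_0}$ vs.\ $\|f\|_{p_0}$ and $\|\Xi'_a * g\|_{p_0}$ vs.\ $\|\Xi'_b * f\|_{p_0}$ pick up the same Jacobian factor $(\delta^{-\mu(n-1)-2\mu})^{1/p_0}$, which therefore cancels between the two sides of the inequality; and (iii) assemble (i) and (ii) with Theorem~\ref{b-d2} to land on the claimed estimate. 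Since the Jacobian is the same on both sides and only $\delta^\alpha$ enters the Bourgain--Demeter exponent, the net power of $\delta$ is exactly $\delta^{\alpha(\frac{n}{p_0}-\frac{n-1}{2})-\epsilon}$ (absorbing $\epsilon$ harmlessly, as $\delta^{-\alpha\epsilon}\le\delta^{-\epsilon'}$ type losses are acceptable).

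The main obstacle I expect is purely bookkeeping: matching the partition-of-unity functions $\Xi'_a$ on the two scales. Strictly speaking $\Xi'_a$ is attached to a specific covering $\mathcal{P}_\delta$, and the dilation does not map one's chosen covering exactly onto another's; one has to observe that the pulled-back partition is a legitimate partition subordinate to a covering of the right type (comparable caps, finite overlap) and invoke the flexibility in the hypotheses --- the decoupling constant is insensitive to which admissible covering and partition one uses, up to harmless constants. This is the content of the parenthetical remark pointing the reader to \cite{BD2}, and it is exactly why the statement is phrased as a consequence of "simple parabolic rescaling" rather than proved from scratch. A secondary, minor point is checking that the slab restriction $\{|\xi_i|\le\delta^\mu\}\times\mathbb{R}$ is genuinely needed and correctly transported: without it the dilated support would not fit inside the unit box where Theorem~\ref{b-d2} is stated, so one must keep this constraint visible throughout the change of variables.
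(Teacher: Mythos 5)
Your proposal correctly carries out the parabolic rescaling that the paper merely invokes (the paper gives no proof, only the pointer ``by performing simple parabolic rescaling to Theorem~\ref{b-d2} \dots consult the proof of Proposition 4.1 in \cite{BD2}''); the anisotropic dilation $(\xi',\xi_n)\mapsto(\delta^{-\mu}\xi',\delta^{-2\mu}\xi_n)$, the cap-matching computation, and the cancellation of the Jacobian factor between the two sides are exactly the standard steps the citation is pointing to. So the approach is essentially the same as the paper's, with the bookkeeping made explicit.
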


The forthcoming proof of Theorem \ref{main2} is  similar to the one in Section \ref{sec2}.  As we did in Section \ref{sec2}, by interpolation, it suffices to consider only the endpoint $p_{0}=\frac{2(n+1)}{n-1}$.

\subsection{}
In the first step of the proof, we will slice our surface into small pieces so that we can exploit local properties of the conical surface. 
By a linear transformation, we may assume that $L_{0}=\mathbb{R}^d \times \{1\}$ and $C_{1}=1$. 

Fix a small parameter $\epsilon>0$. This $\epsilon$ is essentially the same as the one in the statement of Theorem \ref{main2}. We may also assume that $\epsilon^{-1}$ is a natural number.
We define a sliced surface $\tilde{S}$ by
$$
\tilde{S}= S \cap (\mathbb{R}^n \times \{ \tau_{1} : d \leq \tau_{1} \leq d+4\delta^{\epsilon/2}	\})
$$
for some $d$.
We will prove the decoupling for the sliced surface $\tilde{S}$ first.

\begin{proposition}\label{prop3.3}
	If $\mathrm{supp}(\hat{f}) \subset \mathcal{N}_{\delta}\tilde{S}$, then
	$$\| f \|_{L^{p_{0}}(\mathbb{R}^{n+1})} \lesssim_{\epsilon} \delta^{\frac{n}{p_{0}}-\frac{n-1}{2}-\epsilon}\biggl( \sum_{a \in \mathcal{M}_{\delta}} \|{\Xi_{a} * f}\|_{L^{p_{0}}(\mathbb{R}^{n+1})}^{p_{0}} \biggr)^{\frac{1}{p_{0}}}.  $$
\end{proposition}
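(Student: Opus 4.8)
The plan is to mimic the proof of Proposition \ref{1011prop2.2} from Section \ref{sec2}, but keeping careful track of the power of $\delta$ at every step, since here the elementary "input" estimate (Proposition \ref{rescalied b-d2}) carries a non-negligible exponent. First I would reduce Proposition \ref{prop3.3} to an iterative step analogous to Proposition \ref{1011prop2.3}: for $a\in\mc{M}_{\delta^{2\mu}}$ and $\mathrm{supp}(\hat f)\subset\mc{N}_\delta\tilde S$,
\begin{displaymath}
\|\Xi_a*f\|_{L^{p_0}(\R^{n+1})}\lesssim_\epsilon \delta^{\frac{\epsilon}{2}\left(\frac{n}{p_0}-\frac{n-1}{2}\right)-\epsilon^3}\Bigl(\sum_{b\in\mc{M}_{\delta^{2\mu+\epsilon/2}}}\|\Xi_a*\Xi_b*f\|_{L^{p_0}(\R^{n+1})}^{p_0}\Bigr)^{1/p_0},
\end{displaymath}
where the extra factor $\delta^{\frac{\epsilon}{2}(\frac{n}{p_0}-\frac{n-1}{2})}$ (with $\alpha=\epsilon/2$) is exactly what Proposition \ref{rescalied b-d2} provides. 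The geometric content is the same as in Section \ref{sec2}: after translating/rotating so that the relevant cone vertex is at the origin and the normal is $e_n$, the sliced conical surface $\tilde S\cap\Pi_{a,\delta^{2\mu}}$ lies in a $\delta^{2\mu+\epsilon/2}$-neighborhood of a cylinder over a piece of a hypersurface of non-vanishing Gaussian curvature; that piece, after an affine normalization, is (a truncated portion of) some $H_v^{n-1}$ with $v\in(\R\setminus\{0\})^{n-1}$ — note this is precisely why we need Theorem \ref{b-d2} for $H_v^{n-1}$ rather than only for $P^{n-1}$, since the principal curvatures of $E_0$ may have mixed signs. One then invokes Proposition \ref{rescalied b-d2} (via Fubini in the $\tau_1$-variable and the uncertainty principle on a ball of radius $\delta^{-(2\mu+\epsilon/2)}$) to obtain the displayed iterative inequality.

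Next I would run the iteration. Starting from the trivial flattening estimate $\|f\|_{p_0}\lesssim\delta^{-C\epsilon}\bigl(\sum_{a\in\mc{M}_{\delta^\epsilon}}\|\Xi_a*f\|_{p_0}^{p_0}\bigr)^{1/p_0}$ — here one must be slightly careful that, unlike in Section \ref{sec2}, the flattening should be done in $\ell^{p_0}$ rather than $\ell^2$, but since at scale $\delta^\epsilon$ there are only $O(\delta^{-C\epsilon})$ caps overlapping any fixed cap, H\"older costs only $\delta^{-C\epsilon}$ — one applies the iterative step with $\mu=\mu_l=\frac{l}{4}\epsilon$ for $l=3,4,\ldots,\frac{2}{\epsilon}-1$, using Assumptions $A_3$ and $A_4$ at each stage to control the multiplicity $|\{a\in\mc{M}_{\delta^{2\mu_l}}:\Pi_{a,\delta^{2\mu_l}}\cap\Pi_{b,\delta^{2\mu_{l+1}}}\cap\mc{N}_\delta S\neq\emptyset\}|=O(1)$, exactly as in the proof of Proposition \ref{1011prop2.2}. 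The number of iterations is $O(1/\epsilon)$, so the accumulated loss from the $\delta^{-\epsilon^3}$ factors and from the $O(1)$ multiplicities is $\delta^{-O(\epsilon^2)}=\delta^{-\epsilon}$ after relabeling $\epsilon$.

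The new bookkeeping is in the product of the gain factors. Each application at level $\mu_l$ contributes $\delta^{\frac{\epsilon}{2}(\frac{n}{p_0}-\frac{n-1}{2})}$, and there are $\frac{2}{\epsilon}-3$ applications, so the total gain is
\begin{displaymath}
\delta^{\left(\frac{2}{\epsilon}-3\right)\frac{\epsilon}{2}\left(\frac{n}{p_0}-\frac{n-1}{2}\right)}=\delta^{\left(1-\frac{3\epsilon}{2}\right)\left(\frac{n}{p_0}-\frac{n-1}{2}\right)}=\delta^{\frac{n}{p_0}-\frac{n-1}{2}}\cdot\delta^{-\frac{3\epsilon}{2}\left(\frac{n}{p_0}-\frac{n-1}{2}\right)},
\end{displaymath}
and the spurious $\delta^{-\frac{3\epsilon}{2}(\frac{n}{p_0}-\frac{n-1}{2})}$ is again of the form $\delta^{-O(\epsilon)}$ and gets absorbed. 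Combined with the initial $\delta^{-C\epsilon}$ from flattening, this yields $\|f\|_{p_0}\lesssim_\epsilon\delta^{\frac{n}{p_0}-\frac{n-1}{2}-\epsilon}\bigl(\sum_{a\in\mc{M}_\delta}\|\Xi_a*f\|_{p_0}^{p_0}\bigr)^{1/p_0}$, which is Proposition \ref{prop3.3}. The main obstacle — really the only subtle point — is ensuring that the exponent arithmetic closes: one must verify that the telescoping of scales $\delta^\epsilon\to\delta^{3\epsilon/2}\to\cdots\to\delta$ produces exactly the exponent $1$ multiplying $(\frac{n}{p_0}-\frac{n-1}{2})$ (up to $O(\epsilon)$), and that all error exponents are genuinely $o(1)$ as $\epsilon\to 0$ and hence harmless after relabeling; everything else is a routine transcription of Section \ref{sec2}, as the authors indicate. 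Finally, the passage from $\tilde S$ to $S$ is done exactly as after Proposition \ref{1011prop2.2}, partitioning the $\tau_1$-range into $O(\delta^{-\epsilon/2})$ slabs and summing by the triangle inequality, which costs only another $\delta^{-O(\epsilon)}$.
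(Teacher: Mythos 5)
Your proposal matches the paper's argument: deduce Proposition \ref{prop3.3} by iterating an $\ell^{p_0}$ single-scale decoupling step (the paper's Proposition \ref{prop3.4}), which in turn is obtained from Proposition \ref{rescalied b-d2} via the flattening of $\tilde S\cap\Pi_{a,\delta^{2\mu}}$ onto a cylinder over a piece of $H_v^{n-1}$, Fubini, and localization to a ball of radius $\delta^{-(2\mu+\epsilon/2)}$; the exponent bookkeeping telescopes exactly as you describe. The only discrepancies are harmless arithmetic slips — the number of applications is $\tfrac{2}{\epsilon}-2$ rather than $\tfrac{2}{\epsilon}-3$, the paper's per-step error is $\delta^{-\epsilon^2/2}$ rather than your $\delta^{-\epsilon^3}$, and since $\tfrac{n}{p_0}-\tfrac{n-1}{2}<0$ the ``spurious'' factor is actually a $\delta^{+O(\epsilon)}$ gain rather than a loss — none of which affects the conclusion.
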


The desired decoupling inequalities for the surface $S$ can be deduced from Proposition \ref{prop3.3}. This can be shown by using arguments in Subsection \ref{subsection2.1}, so we do not reproduce it here. Hence, what remains is to show Proposition \ref{prop3.3}.

\subsection{} 
We will deduce Proposition \ref{prop3.3} from the following proposition.
\begin{proposition}\label{prop3.4}
	Fix $\mu >0$ such that $2\mu+\epsilon/2 \leq 1$ and $\mu \geq \epsilon/2$. Let $a \in \mathcal{M}_{\delta^{2\mu}}$. 
	If $\mathrm{supp}(\hat{f}) \subset \mathcal{N}_{\delta}\tilde{S}$, then
	\begin{displaymath}
	\|{\Xi_{a} * {f}}\|_{L^{p_{0}}(\mathbb{R}^{n+1})} 
	\lesssim_{\epsilon}  \delta^{\frac{\epsilon}{2}(\frac{n}{p_{0}}-\frac{n-1}{2}-\epsilon)} 
	\biggl( \sum_{b\in \mathcal{M}_{\delta^{2\mu+\epsilon/2} }} \|{\Xi_{a}*\Xi_{b} * {f}}\|_{L^{p_{0}}(\mathbb{R}^{n+1})}^{p_{0}} \biggr)^{\frac{1}{p_{0}}}.
	\end{displaymath}
\end{proposition}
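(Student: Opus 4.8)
The plan is to mimic the proof of Proposition \ref{1011prop2.3} from Subsection \ref{subsection2.3}, replacing the appeal to Theorem \ref{b-d} by the rescaled Bourgain--Demeter estimate, Proposition \ref{rescalied b-d2}, so as to track the precise power of $\delta$. First I would localise: by a standard argument it suffices to prove the estimate with $L^{p_0}(\mathbb{R}^{n+1})$ replaced by $L^{p_0}(w_B)$ for a ball $B$ of radius $r_B=\delta^{-(2\mu+\epsilon/2)}$, using the rapidly decaying weight $w_B$. Then, writing $a\in\eta(y_0,y_1)$ for a good pair $(y_0,y_1)$, I would normalise by a translation and rotation so that $y_0$ is the origin, $y_1=e_{n+1}$, and the normal to $S$ at both $y_0$ and $y_1$ is $e_n$. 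As in Section \ref{sec2}, after a partition of unity the conical surface near $a$ is the graph of a function $G_0$ on a cube of side $\sim\delta^\mu$, with $\nabla G_0(0)=0$ and $G_0(\xi')=(\xi')^T[HG_0(0)]\xi' + O(|\xi'|^3)$; the crucial difference from Section \ref{sec2} is that here $HG_0(0)$ is only \emph{non-degenerate}, not positive definite, so the model surface is a truncated hyperbolic paraboloid $H_v^{n-1}$ rather than the elliptic paraboloid $P^{n-1}$.

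Next I would carry out the same geometric reduction as in Subsection \ref{subsection2.3}: the piece $\tilde S\cap\Pi_{a,\delta^{2\mu}}$, parametrised by $\{(1-\theta)P_0(\xi')+\theta P_1(\xi'):|\xi'|\lesssim\delta^\mu,\ 0\le\theta\le\delta^{\epsilon/2}\}$, lies within $\delta^{2\mu+\epsilon/2}$ of the cylinder $\{P_0(\xi')+\theta e_{n+1}\}$ over $G_0$; the verification is word-for-word the one given for \eqref{cone-part} and \eqref{cylinder} with $k=1$, using Claim \ref{claim} and Taylor expansion, and does not use the sign of the curvatures. By the uncertainty principle, decoupling for $\Xi_a*f$ over the neighbourhood $\mathcal{N}_\delta\tilde S\cap\Pi_{a,\delta^{2\mu}}$ is equivalent to decoupling for the corresponding function over the $\delta^{2\mu+\epsilon/2}$-neighbourhood of this cylinder. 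By Fubini in the $e_{n+1}$-direction this reduces to an $(n)$-dimensional decoupling for $\mathcal{N}_{\delta^{2\mu+\epsilon/2}}H_v^{n-1}$ restricted to the slab $\{|\xi_i|\le\delta^\mu\}\times\mathbb{R}$, which is exactly the hypothesis of Proposition \ref{rescalied b-d2} with $\alpha=\epsilon/2$. Applying that proposition gives the factor $\delta^{\frac{\epsilon}{2}(\frac{n}{p_0}-\frac{n-1}{2})-\epsilon}$, and since $\frac{n}{p_0}-\frac{n-1}{2}-\epsilon \le \frac{\epsilon}{2}\big(\frac{n}{p_0}-\frac{n-1}{2}\big)-\epsilon$ is false in general but the exponent stated in Proposition \ref{prop3.4} is $\frac{\epsilon}{2}(\frac{n}{p_0}-\frac{n-1}{2}-\epsilon)$, I would simply absorb the discrepancy between $\frac{\epsilon}{2}(\frac{n}{p_0}-\frac{n-1}{2})-\epsilon$ and $\frac{\epsilon}{2}(\frac{n}{p_0}-\frac{n-1}{2}-\epsilon)$ into the $\lesssim_\epsilon$, since $\frac{n}{p_0}-\frac{n-1}{2}<0$ makes the former the smaller (hence safely bounded by the latter after relabelling $\epsilon$) — more cleanly, apply Proposition \ref{rescalied b-d2} with a slightly smaller auxiliary parameter so that the net power is exactly the claimed one.

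The substantive point, and the only place any care is needed, is bookkeeping of exponents: the rescaling in Proposition \ref{rescalied b-d2} with $\mu$ the present $\mu$ and $\alpha=\epsilon/2$ produces neighbourhood width $\delta^{2\mu+\epsilon/2}$, matching the width of the cylinder's fattened copy, and produces the gain $\delta^{\alpha(\frac{n}{p_0}-\frac{n-1}{2})-\epsilon}$ which is the advertised constant. I would also note that the constants $C_i$ from Assumption (A) only inflate the boxes by fixed factors and so are harmless. The main obstacle to watch for is therefore not analytic but organisational: one must check that the slab constraint $\{|\xi_i|\le\delta^\mu\}$ in Proposition \ref{rescalied b-d2} is genuinely the one produced by $|\xi'|\lesssim\delta^\mu$ after the affine normalisation (it is, up to the harmless constant), and that the off-origin curvature of $E_0$ guarantees $v\in(\mathbb{R}\setminus\{0\})^{n-1}$ so that $H_v^{n-1}$ is non-degenerate and Proposition \ref{rescalied b-d2} applies; this is exactly the non-vanishing-Gaussian-curvature hypothesis on $E_0$. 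With these checks in place the proof of Proposition \ref{prop3.4} is complete by the same localisation-plus-uncertainty-principle argument used for Proposition \ref{1011prop2.3}, and Proposition \ref{prop3.3}, hence Theorem \ref{main2}, follows by the identical iteration over $\mu=\mu_l=\frac{l}{4}\epsilon$ as in Subsection \ref{subsection2.2}, the powers $\delta^{\frac{\epsilon}{2}(\frac{n}{p_0}-\frac{n-1}{2})}$ multiplying up over the $O(\epsilon^{-1})$ steps to the global exponent $\delta^{\frac{n}{p_0}-\frac{n-1}{2}}$.
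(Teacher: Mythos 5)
Your strategy — localise to a ball of radius $\delta^{-(2\mu+\epsilon/2)}$, approximate the conical cap $\tilde S\cap\Pi_{a,\delta^{2\mu}}$ by a cylinder over a hyperbolic paraboloid $H_v^{n-1}$, apply the rescaled Bourgain–Demeter bound (Proposition \ref{rescalied b-d2}) with $\alpha=\epsilon/2$, and then use Fubini — is exactly the paper's. You also correctly identify the two features that differ from Section \ref{sec2}: the model surface must be $H_v^{n-1}$ rather than $P^{n-1}$ because $HG(0)$ is only non-degenerate, and the exponent of $\delta$ has to be tracked, which is why Proposition \ref{rescalied b-d2} replaces Theorem \ref{b-d}. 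The only place you deviate from the paper is the geometric parametrisation: you port over the $k$-cone setup of Subsection \ref{subsection2.3} (good pair $(y_0,y_1)$, $P_0,P_1$, Claim \ref{claim}), whereas the paper in Section \ref{sec3} simply normalises $a=(0,\ldots,0,1)$ and writes the cap directly as $\{(1+\theta)(\xi',G(\xi'),1)\}$, so that the only Taylor estimate required is $\lvert(1+\theta)G(\xi')-G((1+\theta)\xi')\rvert\lesssim\delta^{2\mu+\epsilon/2}$ — no $h_i$ needed. Your route works (for a conical surface the relevant $h_1$ is just the dilation $\xi'\mapsto(1+\theta)\xi'$, so the existence part of Claim \ref{claim} is trivial here), but you should not cite Claim \ref{claim} verbatim: as stated and proved it assumes strict convexity (positive definite Hessian), which is not the standing hypothesis for a conical surface, and invoking it without comment would leave a small but real gap. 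Finally, your handling of the $\epsilon$-bookkeeping at the end, while slightly muddled in its first formulation, reaches the right conclusion: Proposition \ref{rescalied b-d2} with $\alpha=\epsilon/2$ gives a $\delta^{\frac{\epsilon}{2}(\frac{n}{p_0}-\frac{n-1}{2})-\epsilon'}$ gain for any $\epsilon'>0$, and taking $\epsilon'\le\epsilon^2/2$ yields the stated factor $\delta^{\frac{\epsilon}{2}(\frac{n}{p_0}-\frac{n-1}{2}-\epsilon)}$.
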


Proposition \ref{prop3.3} can be deduced from Proposition \ref{prop3.4} by arguments in Subsection \ref{subsection2.2}, so we omit the details here.

\subsection{} In this subsection, we will deduce Proposition \ref{prop3.4} from Proposition \ref{rescalied b-d2}. To do this, we will just follow the arguments used in Subsection \ref{subsection2.3}.
Let $B\subset \R^{n+1}$ be a ball of radius $r_B:=\delta^{-(2\mu+\frac{\epsilon}{2})}$, centered at $c_B$. Let $C$ be a large constant. Define a weight $w_{B}$ associated with the ball $B$ by $(1+\frac{\cdot - c_B}{r_B})^{-C}$. To prove Proposition \ref{prop3.4}, by a simple localisation argument, it suffices to prove 
\beq\label{localised2}
\|{\Xi_{a} * {f}}\|_{L^{p_{0}}(w_{B})} 
\lesssim_{\epsilon}   \delta^{\frac{\epsilon}{2}(\frac{n}{p_{0}}-\frac{n-1}{2}-\epsilon)} 
\biggl( \sum_{b\in \mathcal{M}_{\delta^{2\mu+\epsilon/2} }} \|{\Xi_{a}*\Xi_{b} * {f}}\|_{L^{p_{0}}(w_B)}^{p_{0}} \biggr)^{\frac{1}{p_{0}}}.
\endeq
Under certain linear transformation, we may assume that
$a=(0,\ldots,0,1)$ and 
 $E_{0}$ is represented as the graph of a smooth function $G$ with $G(0)=0$ and $\nabla G(0)=(0,\ldots,0) \in \mathbb{R}^{n-1}$. Hence, 
w.l.o.g we may assume that the conical surface $\tilde{S} \cap \Pi_{a,\delta^{2\mu}}$ is given by
\beq\label{var}
\{(1+\theta)(\xi',G(\xi'),1) \in \mathbb{R}^{n-1} \times \mathbb{R} \times \mathbb{R} : |\xi'| \lesssim \delta^{\mu}, 0 \leq \theta \leq \delta^{\epsilon/2} \}.
\endeq
We claim that this surface is contained in the $\delta^{2\mu +\epsilon/2}$ neighborhood of the following cylinder
\beq\label{conic cyl}
\{(\xi',G(\xi')) : |\xi'| \lesssim \delta^{\mu}	\} \times \mathbb{R}.
\endeq
To see this, we take any point in \eqref{var}, and we write it as
$$
\Big(	(1+\theta)\xi',(1+\theta)G(\xi'),1+\theta
\Big).
$$
We calculate its distance with the point
$$
\Big( (1+\theta)\xi', G((1+\theta)\xi'),1+\theta
\Big)
$$
from the cylinder \eqref{conic cyl}. This amounts to proving
$$
\Big|(1+\theta)G(\xi')-G((1+\theta)\xi')\Big| \lesim \delta^{2\mu+\frac{\epsilon}{2}},
$$
which follows directly from Taylor's formula.
\\

So far we have verified that the conical surfaces \eqref{var} lies in a $\delta^{2\mu+\frac{\epsilon}{2}}$-neighbourhood of the cylinder \eqref{conic cyl}. Hence to prove the localised decoupling inequality \eqref{localised2}, by the uncertainly principle, it is the same as proving a corresponding decoupling inequality associated with the cylinder \eqref{conic cyl}, which further follows from Proposition \ref{rescalied b-d2} and Fubini's theorem. This finishes the proof of Proposition \ref{prop3.4}.

\noindent Department of Mathematics, Indiana University, 831 East 3rd St., Bloomington IN 47405\\
\emph{Email address}: shaoguo@iu.edu\\

\noindent Department of Mathematics, Pohang University of Science and Technology, Pohang 790-784, Republic of Korea\\
\emph{Email address}: ock9082@postech.ac.kr

\end{document}